\numberwithin{equation}{section}
\newtheorem{theorem}{Theorem}[section]
\newtheorem{lemma}[theorem]{Lemma}
\newtheorem{proposition}[theorem]{Proposition}
\newtheorem{corollary}[theorem]{Corollary}
\theoremstyle{definition}
\newtheorem{assumption}{Assumption}[section]
\newtheorem{definition}{Definition}[section]
\newtheorem{example}{Example}[section]
\theoremstyle{remark}
\newtheorem{remark}{Remark}[section]
\newcommand\bR{\mathbb{R}}
\newcommand\bP{\mathbb{P}}
\newcommand{\bN}{\mathbb{N}}
\newcommand*{\beq}{\begin{equation}}
\newcommand*{\eeq}{\end{equation}}
\newcommand{\E}{\mathbb{E}}
\newcommand{\R}{\mathbb{R}}
\newcommand{\bit}{\begin{itemize}}
\newcommand{\eit}{\end{itemize}}
\newcommand\D{\partial}
\newcommand{\supp}{\text{supp}}
\newcommand{\dist}{\text{dist}}
\begin{document}

\begin{abstract} 
We study densities of two-dimensional diffusion processes with one non-negative component. For such diffusions, 
the density may explode at the boundary, thus making a precise specification of the boundary 
condition in the corresponding forward Kolmogorov equation problematic. 
We overcome this by extending a classical symmetry result for densities of one-dimensional diffusions to our case, 
thereby reducing the study of forward equations with exploding boundary data to the study of a related 
backward equation with non-exploding boundary data.
We also discuss applications of this symmetry for option pricing in stochastic volatility 
models and in stochastic short rate models.
\end{abstract}

\title[Density symmetries for a class of 2-D diffusions]{Density symmetries for a class of 2-D diffusions with applications to finance}
\author[K. Dareiotis and E. Ekstr\"om]{Konstantinos Dareiotis and Erik Ekstr\"om}

\address[K. Dareiotis]{Max Planck Institute for Mathematics in the Sciences, Inselstrasse 22, 04103 Leipzig, Germany}
\email{konstantinos.dareiotis@mis.mpg.de}

\address[E. Ekstr\"om]{Department of Mathematics, Uppsala University, Box 480, 
751 06 Uppsala, Sweden}
\email{Erik.Ekstrom@math.uu.se }

\maketitle

\section{Introduction}

We study the distribution of a special class of diffusions  of the form
\begin{equation}\label{system}
\left\{\begin{array}{ll}
dY_t=\beta_1(Y_t)\,dt + \sigma_1( Y_t) \,dV_t\\
dZ_t=\beta_2(Y_t)\,dt + \sigma_2( Y_t) \,dW_t,\,\end{array}\right.
\end{equation}
where $\beta_i$, $\sigma_i$, $i=1,2$ are given functions, and $V$ and $W$ are two one-dimensional 
Brownian motions. Furthermore, 
the coefficients are specified so that $Y$ is a non-negative process.
This class of decoupled systems includes some common stochastic volatility models (such as the Heston model) 
for derivative pricing,
as well as stochastic short rate models (such as the CIR-model) for derivative pricing.
Denoting by $X=(Y,Z)$, for a given initial condition $X_0= \eta$ with density $\rho \in C^\infty_c((0,\infty) \times \bR)$, the density 
$$p(t,x)=\frac{\mathbb P(X_t\in dx)}{dx}$$
is expected to satisfy the associated forward Kolmogorov  equation
$$\D_tp=L^* p,$$
initial data $p(0,x)=\rho(x)$, where $L^*$ is the formal adjoint of the infinitesimal generator of $X$. However, 
for a characterization of the density in terms of the forward equation, boundary conditions at the spatial 
boundary $\{0\}\times\R$ are needed. Moreover, it is well-known that in many cases of practical importance, 
the density suffers from exploding boundary behaviour,
thus introducing instabilities to any numerical scheme based on discretizing the forward equation. 
To overcome this, one approach
would be to first determine the exact blow-up rate of the density, and then factor out this from the equation to, hopefully, arrive at 
more well-behaved boundary conditions. This, however, requires knowledge about the exact blow-up rate of the density.

Our approach, instead, builds on the extension of a classical symmetry of the transition density for one-dimensional diffusion processes.
In fact, the density 
$$p(t,x_0,x):=\mathbb P(X_t\in dx)/dx$$
of a one-dimensional diffusion $X=X ^{x_0}$ with $X_0=x_0$ satisfies 
\begin{equation}
\label{symmetry1d}
\mu(x_0)p(t,x_0,x)=\mu(x)p(t,x,x_0),
\end{equation}
where $\mu(x)$
is the density of the speed measure (see \cite[Section 4.11]{ItoMcKean}). Along with its theoretical interest, 
this symmetry also has important 
applications for numerical treatments of the density for non-negative processes. Indeed, 
if one seeks the density $p$ of $X_t$, rather than solving the \emph{forward} Kolmogorov  equation in the $x$-variable, one may instead
employ \eqref{symmetry1d} to solve a {\em backward} equation. The advantage of this procedure is in the specification
of boundary conditions, since the density may explode close to the boundary $x=0$, whereas the appropriate
boundary condition of the backward equation is much more well-behaved, compare \cite{ELT} and \cite{ET}.

To the best of our knowledge, extensions of the symmetry relation \eqref{symmetry1d} to higher dimensions
are still missing in the literature. 
In the present article we provide such a symmetry relation for systems of the form \eqref{system} under certain 
conditions on the coefficients, see Theorem~\ref{thm: symmetry}.
Moreover, the stochastic representation appearing in \eqref{q} can typically be characterized as the unique solution of 
a backward equation with well-behaved boundary conditions. For completeness, we also include a study of the associated backward equation. In fact, in Theorem~\ref{thm: solution of PDE} we demonstrate that the stochastic representation
appearing in \eqref{q} can be characterized as the unique solution of 
an associated backward equation for a class of systems that finds applications in mathematical finance. 

The symmetry relation in Theorem~\ref{thm: symmetry} is first proved for processes with the whole plane as state space by approximating the coefficients with smooth coefficients defined on the whole real line. For such problems, the symmetry relation 
\eqref{symmetry2d} is derived using fairly standard methods involving integration by parts, compare Equation \eqref{eq: result for n}.
To pass to the limit, we invoke an approximation result of \cite{BAH} for diffusion processes with H\"older continuous coefficients, see Lemma~\ref{lem: convergence of diffusions}. In our study of the corresponding backward equation, one of the main difficulties is
in specifying the boundary conditions at the plane $y=0$. First, to establish
$C^1$-regularity of the stochastic solution of the equation up to the boundary we again 
approximate the problem with smooth coefficients on the whole plane and then take the limit using appropriate parabolic estimates,
compare Proposition~\ref{pr: continuous derivative x1}. Another key step is to show that the second order terms with at least one derivative in the $y$-direction explode slower than the reciprocal of the corresponding diffusion coefficient, see 
Proposition~\ref{pr: blow up rate second derivative}. This is obtained by using a combination of parabolic estimates and suitable scaling arguments.

Finally let us introduce some notation that will be used throughout the article. Let $T\in (0, \infty)$ and let $(\Omega, \mathscr{F}, \mathbb{F}, \bP)$ be a filtered probability space with the filtration $\mathbb{F}:=(\mathscr{F}_t)_{t \in [0,T]}$ satisfying the usual conditions. On $\Omega$ we consider two $\mathbb{F}$-Wiener processes $(V_t)_{t \in [0,T]}$ and $(W_t)_{t \in [0,T]}$ with correlation $\lambda \in (-1,1)$. For random variables $X, X_n$, $n \in \bN$,  we will write 
$$
X_n \overset{\bP}{\to}X  
$$
if $X_n\to X$  in probability as $n \to \infty$. Let $d$ be a positive integer. For an open set $Q \subset \bR^d$ and an integer $k \in \bN$,  $W^k_2(Q)$ will denote the set of all functions in $L_2(Q)$ having distributional derivatives up to  order $k$  in $L_2(Q)$.  We will denote by $C_b^\infty(Q)$ the set off all smooth real-valued functions on $Q$ that are bounded along with their derivatives of any order. If $Q \subset \bR^d$ is open, we will denote by $C^\infty_c(Q)$ the set of all smooth functions with compact support in $Q$. We also set $\mathbb{W}:=  C([0,T] ; L_2(\bR^2)) \cap L_2([0,T]; W^1_2(\bR^2))$ and  $\mathcal{W}:= C^\infty([0,T] \times \bR^2)\cap \left( \cap_{m=1}^\infty C([0,T]; W^m_2(\bR^2)) \right)$. The notation $(\cdot, \cdot)_{L_2}$ will stand for  the inner product in $L_2(Q)$. If $x \in \bR^2$, then $x_1$ and $x_2$ will denote the first and second coordinates of $x$ with respect to the standard basis in $\bR^2$. Finally, we set $D:= (0,\infty) \times \bR$.

\section{Formulation of the main results}

We consider functions $\beta =(\beta_1, \beta_2)\colon [0,\infty) \to \bR^2 $ and $\sigma=(\sigma_1, \sigma_2) : [0,\infty) \to \bR^2$.
The system 
\begin{equation}
 \label{eq: main SDE}
\left\{\begin{aligned}
       dY_t&=   \beta_1(Y_t) dt + \sigma_1(Y_t) \, dV_t \\
         dZ_t&=\beta_2 (Y_t)dt +\sigma_2(Y_t) \, dW_t,
\end{aligned}\right.
\end{equation}
with initial condition $(Y_0,Z_0)=(\psi,\xi)=\eta$, where $\psi\geq 0$ and $\xi$ are $\mathscr{F}_0$-measurable random variables, will be denoted by $\Pi(\eta;\beta,\sigma)$. We denote by $\lambda\in(-1,1)$ the instantaneous correlation between $V$ and $W$, and we set
\begin{equation*}                                    
 h:= \lambda \frac{\sigma_2}{\sigma_1}
\end{equation*}
and 
\begin{equation}                                      \label{eq: definition a ij}
a_{ij}:= \lambda_{ij} \frac{\sigma_i \sigma_j}{2}
\end{equation}
for $i,j=1,2$,
where $\lambda_{ij}= \lambda$ for $i \neq j$ and $\lambda_{ij}=1$ otherwise. Often, coefficients of SDEs of the type \eqref{eq: main SDE} (say $f$)  will be regarded as functions on subsets of $\bR^2$ by the formula $f(x):= f(x_1)$.

\begin{assumption}              \label{as: definition of class S}
The functions $\beta$ and $\sigma$ satisfy:
\begin{itemize}
\item[(i)] $\beta_i,  \sigma_i \in C([0,\infty))$. Moreover, for every $R >0$ there exists $N_R \in \bR$ such that 

$|\beta_1(r)-\beta_1(r')|+|\sigma_1(r)-\sigma_1(r')|^2  \leq N_R|r-r'|$ 
for all $r,r' \in [0,R]$. 

\item[(ii)]  $\beta_1(0)\geq 0$, $\sigma_1(0)=0$, $\sigma_1(r)>0$ for $r>0$, and $\sigma_2(r) \geq 0$ for $r \geq 0$. Moreover,  there exists a constant $N\in \bR$ such that 
$$ 
|\beta_1(r)|+\sigma_1(r) \leq N(1+r) 
$$
for all $r \geq 0$.

\item[(iii)] 
$h \in C^1((0, \infty))$,   $\beta_1 h, \   a_{11}h' \in C([0,\infty))
$ 
and 
 $
 a_{11}h'(0)=0.
 $
\end{itemize}
In addition,  there exist functions $\sigma^n_1, \ \sigma^n_2 \in C^\infty_b( \bR)$ such that
\begin{itemize}
\item[(iv)] $\sigma^n_i \to \sigma_i $ uniformly on compacts of $(0, \infty)$ as $ n \to \infty$, 

\item[(v)] $\sigma^n_i (r) \geq 1/n$ for all $r \in \bR$,  and there exists a constant $N$ such that 
$$
\sup_n|\sigma^n_1(r)|\leq N(1+|r|)
$$ 
for all $r \in \bR$,

\item[(vi)] $(\lambda \sigma^n_2/\sigma^n_1)' \to  h' $ uniformly on compacts  of $(0, \infty)$  as $n \to \infty$. 
\end{itemize}
\end{assumption}

\begin{remark}
Notice that Assumption \ref{as: definition of class S} is satisfied if for example (i), (ii) and one of the following hold:
\begin{itemize}
\item[1)]  $\lambda=0$, 

\item[2)] $\sigma_2=c \sigma_1$ for some constant $c \in \bR$,

\item[3)] $\sigma_1, \sigma_2 \in  C^1((0,\infty))$, $a_{11} h', \beta_1 h \in C([0,\infty))$ and 
$a_{11} h'(0)=0$.
\end{itemize}
This shows that the Heston model (in which $\sigma_2=c \sigma_1$) is included in the analysis, compare Example~\ref{ex} below.
Similarly, Remark~\ref{rem} discusses derivative pricing models with stochastic interest rate for which $\lambda=0$.

Also notice that under (i) and the linear growth condition from (ii) of Assumption \ref{as: definition of class S}, there exists a unique solution $X:=(Y,Z)$ of $\Pi(\eta;\beta,\sigma)$ (see, e.g., \cite{GK1}). Moreover, due to the assumptions $\sigma_1(0)=0$ and 
$\beta_1(0)\geq 0$, we have $Y_t\geq 0$ for all times $t\in[0,T]$.
\end{remark} 

For the statement of our main theorem, let $\mu \colon (0, \infty) \to \bR$ be given by
\begin{equation}               \label{eq: def mu}
\mu(r)= \frac{1}{a_{11}(r)} \exp\left( \int_1^r  \frac{\beta_1(l)}{a_{11}(l)} dl \right).
\end{equation}
We also introduce the function 
\begin{equation}                        \label{eq: definition of h tilde}
\tilde{\beta}_2:= 2a_{11} h'+2\beta_1h+\beta_2,
\end{equation}
and we set $\tilde{\beta}=(\beta_1, \tilde{\beta}_2)$. By Assumption~\ref{as: definition of class S} we have that $\tilde\beta_2\in C([0,\infty))$.

\begin{theorem}
\label{thm: symmetry}
Let Assumption \ref{as: definition of class S} hold and let $X$ be the unique  solution of $\Pi(\eta;\beta, \sigma)$.  
Assume that $\eta$ has a density $\rho \in C^\infty_c(D)$.
Then for any $g \in C^\infty_c(D)$  we have
\begin{equation}
\label{symmetry2d}
\E \left[ g(X_T) \right] = \int_D g(x) q(T,x) dx,
\end{equation}
where for $x \in D$, 
\begin{equation}
\label{q}
q(T,x):= \mu(x_1) \E  \left[\frac{ \rho(\tilde{X}_T^x)}{\mu(Y^{x_1}_T)}  \right]
\end{equation}
and $\tilde{X}^x=(Y^{x_1},\tilde{Z}^{x})$ is the unique solution of $\Pi(x; \tilde{\beta}, \sigma)$.
Consequently, the restriction of the law of $X_T$ on $D$ has a density given by $q(T, \cdot)$. 
\end{theorem}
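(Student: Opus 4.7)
\emph{Plan.} Following the strategy sketched after the statement, I would establish the identity first for a uniformly elliptic smooth regularization on the whole plane, and then pass to the limit. For the regularization, take the approximants $\sigma_i^n\in C^\infty_b(\bR)$ of Assumption~\ref{as: definition of class S}(iv)--(vi) (which satisfy $\sigma_i^n\geq 1/n$), together with a globally Lipschitz smooth extension of $\beta$ to all of $\bR$; denote the transition density of the regularized system $\Pi(\eta;\beta,\sigma^n)$ by $p^n(T,w,y)$ and that of $\Pi(x;\tilde\beta^n,\sigma^n)$ by $\tilde p^n(T,x,w)$, where $\tilde\beta^n$ is built from the regularized coefficients by the formula \eqref{eq: definition of h tilde}. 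Both densities are smooth and decay rapidly, thanks to uniform ellipticity.

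The heart of the argument at level $n$ is the pointwise conjugation identity
\begin{equation*}
(L^n)^{*}(\mu^n f) = \mu^n\,\tilde L^n f
\end{equation*}
for every smooth $f$, where $L^n,\tilde L^n$ are the generators of $X^n,\tilde X^n$ and $\mu^n$ is defined from \eqref{eq: def mu} with the regularized coefficients. This is a direct integration-by-parts computation based on the two algebraic relations $(a_{11}^n\mu^n)'=\beta_1\mu^n$ (by construction of $\mu^n$) and $(a_{12}^n\mu^n)'=\mu^n(\beta_1 h^n+a_{11}^n(h^n)')$ (which follows from $a_{12}^n=h^n a_{11}^n$). Writing $F(T,y):=\mu^n(z_1)\,p^n(T,z,y)$ and $G(T,y):=\mu^n(y_1)\,\tilde p^n(T,y,z)$, the conjugation identity implies that $F$ and $G$ satisfy the \emph{same} forward equation $\partial_T\phi=(L^n)^{*}_y\phi$ in $y$ with the same delta initial data $\mu^n(z_1)\,\delta(y-z)$, so uniqueness for the regularized forward problem yields $F\equiv G$, i.e.\ the density symmetry
\begin{equation*}
\mu^n(z_1)\,p^n(T,z,y)=\mu^n(y_1)\,\tilde p^n(T,y,z);
\end{equation*}
integrating both sides against $\rho(z)g(y)$ gives the regularized analog of \eqref{symmetry2d}.

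To pass to the limit, Lemma~\ref{lem: convergence of diffusions} supplies $X_T^n\overset{\bP}{\to}X_T$ and $\tilde X_T^{n,x}\overset{\bP}{\to}\tilde X_T^x$ (the latter pointwise in $x$), while $\mu^n\to\mu$ uniformly on compact subsets of $(0,\infty)$ thanks to Assumption~\ref{as: definition of class S}(iv),(vi); the convergence $\E[g(X_T^n)]\to\E[g(X_T)]$ is immediate from bounded convergence. \emph{The main obstacle} is dominated convergence on the right-hand side: since $a_{11}(0)=0$, both $\mu$ and $\mu^n$ may blow up as their argument approaches $0$, making $\rho(\tilde X_T^{n,x})/\mu^n(Y_T^{n,x_1})$ \emph{a priori} singular at the boundary. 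The compact support $\rho\in C^\infty_c(D)$ rescues the argument, since $\rho(\tilde X_T^{n,x})$ vanishes as soon as $Y_T^{n,x_1}$ drops below a threshold $\epsilon>0$ with $\supp\rho\subset[\epsilon,\infty)\times\bR$, so the singularity of $1/\mu^n$ is never activated. What remains is to construct a dominating function uniform in $n$ and in $x\in\supp g$, which should follow from the uniform linear growth of $\sigma_1^n$ in Assumption~\ref{as: definition of class S}(v) combined with standard moment bounds for the regularized SDEs.
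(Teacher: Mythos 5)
Your overall architecture matches the paper's: regularize to a uniformly elliptic problem on $\bR^2$, prove the identity there via the conjugation $ (L^n)^*(\mu^n f)=\mu^n\tilde L^n f$ (your two algebraic relations $(a_{11}^n\mu^n)'=\beta_1^n\mu^n$ and $a_{12}^n=h^na_{11}^n$ are exactly what the paper uses implicitly when it asserts that $q_{n,m}=\mu_{n,m}v_{n,m}$ solves the adjoint equation), and then pass to the limit with Lemma~\ref{lem: convergence of diffusions}. The paper phrases the regularized identity weakly, pairing the forward solution with initial datum $\rho$ against the backward solution with terminal datum $g$ via the It\^o formula for $\|\cdot\|_{L_2}^2$, rather than through transition densities with delta initial data; your version is equivalent and your handling of the $1/\mu^n$ ``singularity'' is fine, since $\rho(\tilde X^{n,x}_T)/\mu^n(Y^{n,x_1}_T)=(\rho/\mu^n)(\tilde X^{n,x}_T)$ and $\rho/\mu^n$ is a uniformly bounded compactly supported function on $D$ (so no moment bounds are even needed there).

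The genuine gap is in the limit passage with a \emph{single} regularization parameter. Lemma~\ref{lem: convergence of diffusions} requires the approximating coefficients to converge uniformly on compact subsets of $\bR$, i.e.\ up to and across the degenerate boundary $y=0$. But Assumption~\ref{as: definition of class S}(iv) and (vi) only give $\sigma_i^n\to\sigma_i$ and $(\lambda\sigma_2^n/\sigma_1^n)'\to h'$ uniformly on compacts of $(0,\infty)$; nothing controls $\sigma_i^n$ near $0$ beyond the lower bound $1/n$. Worse, the reversed drift $\tilde\beta_2^n=2a_{11}^n(h^n)'+2\beta_1^nh^n+\beta_2^n$ involves $h^n=\lambda\sigma_2^n/\sigma_1^n$, which can behave arbitrarily near $0$ even though the limit objects $a_{11}h'$ and $\beta_1h$ extend continuously to $0$ with $a_{11}h'(0)=0$ by Assumption (iii). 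So your claim that $\tilde X^{n,x}_T\overset{\bP}{\to}\tilde X^x_T$ does not follow from the stated hypotheses. The paper resolves this with a two-parameter scheme: it composes all coefficients with the cutoff $\varrho_m$ (which maps $\bR$ into $[c_m,\infty)\subset(0,\infty)$), so that for fixed $m$ the required uniform convergence on compacts of $\bR$ reduces to convergence on compacts of $(0,\infty)$ and the limit $n\to\infty$ goes through; only afterwards does it send $m\to\infty$, at which point the convergence $\tilde\beta_2^{\infty,m}\to\tilde\beta_2$ near $y=0$ is extracted precisely from the continuity of $a_{11}h'$ and $\beta_1h$ up to $0$ and from $a_{11}h'(0)=0$. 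You would need to incorporate this (or an equivalent localization near the boundary) to make your limit argument valid.
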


\begin{corollary}{\bf (Symmetry of densities.)}\label{cor}
For each $\xi \in D$, let $X^\xi=(Y^{\xi_1}, Z^{\xi})$ and $\tilde{X}^\xi=(Y^{\xi_1}, \tilde{Z}^{\xi})$ denote the unique solutions of $\Pi(\xi; \beta, \sigma)$ and $\Pi(\xi; \tilde{\beta}, \sigma)$, respectively. Suppose that the restriction of the laws of $X_T^\xi$ and $\tilde{X}_T^\xi$ on $D$ have densities $p(T,\xi,x)$ and $\tilde{p}(T,\xi,x)$, respectively, that are continuous in $(\xi,x) \in D\times D$. Then, for all $(\xi,x) \in D \times D$ we have 
$$
\mu(\xi_1)p(T,\xi,x)= \mu(x_1) \tilde{p}(T,x, \xi).
$$
\end{corollary}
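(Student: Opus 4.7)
The plan is to deduce the pointwise density symmetry directly from Theorem~\ref{thm: symmetry} by choosing the initial density $\rho$ and the test function $g$ as bump functions concentrated near $\xi$ and $x$, respectively, and then passing to the limit using the assumed joint continuity of $p$ and $\tilde{p}$.

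First, fix $\rho \in C^\infty_c(D)$ and $g \in C^\infty_c(D)$, and apply Theorem~\ref{thm: symmetry}. On one hand, conditioning on the initial value $\eta$ (which has density $\rho$) and using the hypothesis that the law of $X^\xi_T$ restricted to $D$ has density $p(T,\xi,\cdot)$, together with the fact that $g$ vanishes outside $D$, yields
\begin{equation*}
\E[g(X_T)] = \int_D \rho(\xi)\, \E[g(X^\xi_T)]\, d\xi = \int_D \int_D \rho(\xi)\, g(x)\, p(T,\xi,x)\, dx\, d\xi.
\end{equation*}
On the other hand, the right-hand side of \eqref{symmetry2d} can be rewritten using the density $\tilde{p}(T,x,\cdot)$ of $\tilde{X}^x_T$ on $D$. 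Since $\rho$ is supported in $D$, the random variable $\rho(\tilde{X}^x_T)/\mu(Y^{x_1}_T)$ is supported on $\{\tilde{X}^x_T \in D\}$, where $\mu(Y^{x_1}_T)=\mu(\tilde{X}^x_{T,1})$ is strictly positive, so
\begin{equation*}
\mu(x_1)\, \E\!\left[\frac{\rho(\tilde{X}^x_T)}{\mu(Y^{x_1}_T)}\right] = \mu(x_1)\int_D \frac{\rho(\xi)}{\mu(\xi_1)}\, \tilde{p}(T,x,\xi)\, d\xi.
\end{equation*}

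Combining these two expressions through \eqref{symmetry2d} and applying Fubini gives
\begin{equation*}
\int_D \int_D \rho(\xi)\, g(x) \left[ p(T,\xi,x) - \frac{\mu(x_1)}{\mu(\xi_1)}\, \tilde{p}(T,x,\xi) \right] dx\, d\xi = 0.
\end{equation*}
Since $\rho$ and $g$ range over all of $C^\infty_c(D)$, and since both $p(T,\xi,x)$ and $\tilde{p}(T,x,\xi)$ are continuous in $(\xi,x) \in D\times D$ (and $\mu$ is continuous and positive on $(0,\infty)$), a standard argument with bump functions approximating point masses at arbitrary $(\xi_0,x_0) \in D\times D$ forces the bracketed expression to vanish identically. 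Multiplying through by $\mu(\xi_1)$ yields the claimed identity
\begin{equation*}
\mu(\xi_1)\, p(T,\xi,x) = \mu(x_1)\, \tilde{p}(T,x,\xi).
\end{equation*}

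The argument is essentially bookkeeping once Theorem~\ref{thm: symmetry} is in hand; the only point requiring minor care is justifying the replacement $\E[g(X_T)] = \int_D \rho(\xi)\, \E[g(X^\xi_T)]\, d\xi$ by conditioning on $\eta$, which is legitimate thanks to strong uniqueness for $\Pi(\eta;\beta,\sigma)$ and the measurable dependence of $X^\xi_T$ on $\xi$.
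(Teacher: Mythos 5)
Your proof is correct, but it takes a different route from the paper's. The paper fixes a single pair $(\xi,x)$, replaces the point mass at $\xi$ by mollified initial laws $\rho^n(\xi-\cdot)$, and then has to prove $\E\, g(X^{(n)}_T)\to \E\, g(X^\xi_T)$ by a genuinely stochastic stability argument: a comparison principle sandwiching $Y^{(n)}$ between $Y^{\xi_1\pm 1/n}$ together with the $L^2$-convergence result of \cite{BAH}, and on the dual side a dominated convergence argument using the continuity of $\tilde p$; the conclusion is then obtained by varying $g$ alone. You instead keep both $\rho$ and $g$ arbitrary in $C^\infty_c(D)$, convert both sides of \eqref{symmetry2d} into double integrals against $p$ and $\tilde p$, and finish with Fubini and the fundamental lemma of the calculus of variations in the two variables jointly. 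What your route buys is the complete avoidance of the mollification and the comparison/stability machinery; what it costs is the disintegration identity $\E[g(X_T)]=\int_D\rho(\xi)\,\E[g(X^\xi_T)]\,d\xi$, which is the one step you should not wave at too quickly: it requires that the conditional law of $X_T$ given $\eta=\xi$ coincide with the law of $X^\xi_T$, which holds here because $\eta$ is $\mathscr{F}_0$-measurable (hence independent of the driving Wiener processes) and the system enjoys pathwise uniqueness (Yamada--Watanabe for $Y$ under Assumption~\ref{as: definition of class S}(i), with $Z$ then determined by $Y$), together with measurability of $\xi\mapsto\E[g(X^\xi_T)]$, which follows from Lemma~\ref{lem: convergence of diffusions}. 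With that step made explicit, the remaining Fubini and du Bois--Reymond arguments are routine, since all integrands are supported in the compact set $\supp\rho\times\supp g\subset D\times D$ where $\mu$ is continuous and positive and the densities are continuous by hypothesis.
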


Note that
Theorem~\ref{thm: symmetry} transforms the problem of calculating a density with respect to the {\em forward} variables
into a problem of solving a {\em backward} equation for a related process. 
Theorem~\ref{thm: solution of PDE} below provides the 
exact formulation of boundary conditions for backward equations corresponding to diffusions of the form \eqref{eq: main SDE}; for related results, see \cite{BKX} and \cite{ET2}. 

\begin{assumption}         \label{as: 2}
The functions $\sigma_i, \beta_i \colon [0,\infty) \to \bR$ satisfy the following:
\begin{enumerate}  

\item There exists $N \in \bR$ such that 
$$
|\sigma_1(r)|+|\beta_1(r)|\leq N(1+|r|),
$$
for all $r \in [0,\infty)$. \label{as2: linear growth}

\item $\sigma_i(r)>0$ for $r >0$, $\sigma_1(0)=0$, and $\beta_1(0)\geq0$. \label{as2: positivity}

\item $\sigma_i$, $\beta_i \in C^\infty((0,\infty)) \cap C([0,\infty)) $.   \label{as2: continuity}

\item $\beta_i$, $a_{ii} \in C^1([0,\infty))$ with  $\beta_i'$, $a_{22}'$ bounded, and $a_{11}'$ is locally Lipschitz and has linear growth.
 \label{as2: regularity}
 
\item Either $\lambda=0$, or $a_{12} \in C^1([0,\infty))$ and  there exists $N_0 \in (0,\infty)$ such that $\frac{1}{N_0} \sigma_2(r) \leq \sigma_1(r) \leq N_0 \sigma_2(r)$ for all $r$ sufficiently small. \label{as2: diffusion behaviour}

\item It holds that
$$
\int_0^1 \left( \int_r^1 \exp \left(\int_1^s \frac{\beta_1(u)}{a_{11}(u)} \,du \right) \, ds \right) \exp \left(\int_r^1 \frac{\beta_1(s)}{a_{11}(s)} \, ds \right)\frac{1}{a_{11}(r)} \, dr = \infty.
$$
\label{as2: exit}
\end{enumerate}
\end{assumption}

As before, under Assumption \ref{as: 2}, if $Y_0=\psi \geq 0 \ a.s.$, then \eqref{eq: main SDE} has a unique solution $X=(Y,Z)$, and $Y_t \geq 0 \ a.s. $ for all $t \in [0,T]$.
 Let us introduce the differential operator  $L$ given by 
\begin{align}
L \phi(x) :=& \sum_{i,j} a_{ij}(x)\D_{ij} \phi(x)+\sum_i \beta_i(x) \D_{i} \phi(x),
\end{align}
and for a function $g \in C^\infty_c(D)$ let us consider the problem 
\begin{equation} \label{eq: parabolic pde}
  \left\{ \begin{array}{ll}
         \D_t u  + L u=0  & \mbox{in $(0,T)  \times D$}\\
        u(T,x)=g (x)&\mbox{for $x \in D$} \\
        \D_t u+a_{22}\D_{22}u+\sum_i \beta_i\D_iu =0 &\mbox{on $(0,T) \times \D D$}.
        \end{array} \right. 
        \end{equation}

\begin{definition}                            \label{def: solution PDE}
A continuous function  $u: [0,T] \times \overline{D} \to \bR$, will be called a solution of equation \eqref{eq: parabolic pde} if $u \in C^{1,2}((0,T) \times D)$,  $ \D_iu,\D_{22}u, \D_t u \in C((0,T) \times \overline{D})$, and the equalities in \eqref{eq: parabolic pde} are satisfied.
\end{definition}

\begin{theorem}                 \label{thm: solution of PDE}
Let Assumption \ref{as: 2} hold and let $X^x$ be the unique solution of \eqref{eq: main SDE} with initial condition $X_0= x \in \overline{D}$. Then  the function $u(t,x)= \E g(X_{T-t}^x)$ is a solution of \eqref{eq: parabolic pde}.
Moreover, $u$ is the unique solution of equation \eqref{eq: parabolic pde} in the class of functions of at most polynomial growth. 
\end{theorem}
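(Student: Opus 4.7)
The proof naturally splits into existence, where one must show that $u(t,x):=\E g(X^x_{T-t})$ is a classical solution of \eqref{eq: parabolic pde}, and uniqueness in the class of functions of at most polynomial growth.

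For existence, my first step would be to verify continuity of $u$ on $[0,T]\times \overline{D}$. This follows from continuous dependence of the unique solution $X^x$ of \eqref{eq: main SDE} on the initial condition (the square-root-type pathwise uniqueness for $Y$ under Assumption~\ref{as: 2}, combined with ordinary strong uniqueness for $Z$ given $Y$) together with boundedness and continuity of $g$. Inside $(0,T)\times D$, the coefficients are smooth and $L$ is uniformly elliptic on every compact set; standard interior Schauder theory (or a direct Feynman--Kac argument) then yields $u\in C^{1,2}((0,T)\times D)$ and $\partial_t u+Lu=0$. The boundary equation on $(0,T)\times \D D$ is obtained by passing to the limit $x_1\to 0+$ in the interior equation, using Proposition~\ref{pr: continuous derivative x1} to conclude that $\partial_i u$, $\partial_{22}u$ and $\partial_t u$ extend continuously up to $\D D$, together with Proposition~\ref{pr: blow up rate second derivative}, which guarantees that $a_{11}\partial_{11}u$ and $a_{12}\partial_{12}u$ vanish as $x_1\to 0+$.

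For uniqueness, let $v$ be another solution of at most polynomial growth. Assumption~\ref{as: 2}\,(\ref{as2: exit}) is precisely Feller's non-attainability test for the one-dimensional diffusion $Y$, so whenever $x_1>0$ we have $Y^{x_1}_s>0$ for all $s>0$ almost surely, and therefore $X^x_s\in D$ for all $s\in(0,T-t]$. Applying It\^o's formula to $v(t+s, X^x_s)$ on $[0,T-t-\varepsilon]$, the drift term vanishes by the interior PDE, and because $v$ has at most polynomial growth while $(Y^{x_1},Z^x)$ admits moments of every order (by the linear growth condition in Assumption~\ref{as: 2}\,(\ref{as2: linear growth})), the It\^o integral is a true martingale. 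Taking expectation and sending $\varepsilon\downarrow 0$ via continuity of $v$ on $[0,T]\times\overline{D}$ yields $v(t,x)=\E g(X^x_{T-t})=u(t,x)$ for $x\in D$. For $x\in\D D$, continuity of both $u$ and $v$ up to the boundary extends the equality by letting $x_1\to 0+$, so the boundary equation is in fact not needed for uniqueness.

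The main technical obstacle is the third step of existence: because $a_{11}$ and $a_{12}$ vanish on $\D D$, the formal limit of the interior equation at $\{y=0\}$ yields the stated boundary condition only if the corresponding second derivatives of $u$ do not blow up faster than the reciprocal of $a_{11}$. This is exactly the content of Proposition~\ref{pr: blow up rate second derivative}, whose proof (via parabolic estimates combined with suitable scaling arguments, as previewed in the introduction) is the technical heart of the argument and is what permits the clean formulation of the degenerate boundary condition in \eqref{eq: parabolic pde}.
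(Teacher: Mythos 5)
Your existence argument follows the paper's route: continuity of $u$ via continuous dependence on the initial data, the interior equation via localization and smoothness of the coefficients on compacts of $D$, and the boundary equation by letting $x_1\to 0+$ in the interior identity using Propositions~\ref{pr: representation derivatives}, \ref{pr: continuous derivative x1} and \ref{pr: blow up rate second derivative}. This is exactly how the paper assembles the existence part (Propositions~\ref{prop: continuity u}--\ref{prop: continuity time derivative}), so no issue there.

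The uniqueness argument, however, contains a genuine gap. You assert that Assumption~\ref{as: 2}(\ref{as2: exit}) is Feller's non-attainability test for $Y$ itself, so that $Y^{x_1}_s>0$ for all $s$ when $x_1>0$, and you conclude that the boundary equation is not needed for uniqueness. This is false: condition (\ref{as2: exit}) is the non-attainability criterion for the \emph{derivative} process $\hat Y$ with drift $\hat\beta_1=\beta_1+\D_1 a_{11}$ (its scale density is $s'(x)\,a_{11}(1)/a_{11}(x)$, which produces precisely the extra factor $1/a_{11}(r)$ in the integral of (\ref{as2: exit})), and it is used in the paper only in Proposition~\ref{pr: continuous derivative x1} for $\hat Y$. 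The process $Y$ itself is allowed to hit $0$ under Assumption~\ref{as: 2}: in the CIR/Heston setting of Example~\ref{ex} the paper explicitly permits $2a\le\sigma^2$, in which case $Y$ reaches the boundary with positive probability while (\ref{as2: exit}) still holds (there $2\hat a=2a+\sigma^2\ge\sigma^2$). Once $X^x$ can spend time on $\D D$, your It\^o computation no longer closes: the drift does not vanish by the interior PDE alone, the second derivatives $\D_{11}v,\D_{12}v$ of an arbitrary solution are allowed to blow up at the boundary (only $a_{i1}\D_{i1}v\to 0$ is required by Definition~\ref{def: solution PDE}), and the martingale property of the stochastic integral is not guaranteed since Definition~\ref{def: solution PDE} imposes no growth control on $\D_i v$. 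The boundary equation is therefore essential for uniqueness, not dispensable. The paper avoids these difficulties entirely by a deterministic comparison argument: it perturbs a polynomially growing solution by $\varepsilon e^{ct}(1+|x_1|^m+|x_2|^m)$ to obtain a strict supersolution, takes a first-touch point of the set $\{\tilde v^\varepsilon<0\}$, and derives a contradiction using the boundary equation together with $a_{11}(0)=a_{12}(0)=0$ and $\lim a_{11}\D_{11}\tilde v+a_{12}\D_{12}\tilde v=0$ when the touch point lies on $\D D$, and Hopf's maximum principle when it lies in $D$. To repair your proof you would either have to adopt such a maximum-principle argument or carry out a substantially more delicate stochastic argument that accounts for the local time-type behaviour of $X^x$ on $\{y=0\}$.
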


\begin{remark}
Notice that in order to characterize the quantity $\E  \left[{ \rho(\tilde{X}_T^x)}/{\mu(Y^{x_1}_T)}  \right]$ from Theorem \ref{thm: symmetry} as a solution of a parabolic PDE, Theorem \ref{thm: solution of PDE} should be applied with $\tilde{\beta}$ and $\rho/ \mu$ in place of $\beta$ and $g$, respectively.
\end{remark}

\begin{example}\label{ex}{\bf (The Heston stochastic volatility model)}
We illustrate Theorems~\ref{thm: symmetry}-\ref{thm: solution of PDE} by considering the problem of calculating densities in 
stochastic volatility models. For that, assume that a stock price $S$ is modelled by
$$dS_t=\sqrt{Y_t}S_t\,dW_t, \qquad S_0=\zeta,$$
where the instantaneous variance $Y$ is a CIR process given by
$$dY_t=(a-bY_t)\,dt + \sigma \sqrt{ Y_t} \,dV_t, \qquad Y_0= \eta _1.$$
Here $V$ and $W$ are two Brownian motions with correlation $\lambda\in(-1,1)$, 
and $a \geq 0$, $b$ and $\sigma>0$ are constants. Notice that under the assumption that $a\geq 0$, $Y$ stays non-negative but 
may hit zero (if $2a\leq \sigma^2$). In particular, we do not need to impose the usual, more strict, condition $2a> \sigma^2$.
Introducing $Z_t:=\ln S_t$ gives the system
$$\left\{\begin{array}{ll}
dY_t=(a-bY_t)\,dt + \sigma  \sqrt{ Y_t } \,dV_t, \qquad Y_0= \eta _1,\\
dZ_t=-({Y_t}/{2})\,dt + \sqrt{Y_t}\,dW_t, \qquad Z_0 =\eta_2,\end{array} \right.$$
where we assume that $\eta=(\eta_1, \eta_2)$ has a smooth density $\rho$. The density 
\[p(t,x) =\frac{\mathbb P(X_t\in dx)}{dx}\] 
then satisfies the forward equation 
\[\left\{\begin{array}{ll}
\D_t p=L^*p & \mbox{on }(0,\infty)\times D\\
p(0,x)=\rho(x) & \mbox{for }x\in D,\end{array}\right.\]
where 
\[L^*p=\frac{1}{2}\D_{11}(\sigma^2 x_1 p) + \lambda\sigma\D_{12}(x_1 p)+ \frac{x_1}{2}\D_{22} p-\D_1((a-bx_1)p)+\frac{x_1}{2}\D_2 p.\]
To calculate the density using the forward equation, however, is not straightforward since the boundary conditions
at the boundary plane $\{x_1=0\}$ are not known (in fact, the density in the Heston model is known to explode for some parameter regimes, see the classical reference \cite{F}).
Instead, the symmetry relation in Theorem~\ref{thm: symmetry} may be used to translate the forward equation with boundary 
explosion into a backward equation with well-behaved boundary conditions. 

More precisely, let 
\[\mu(x_1)=\frac{2}{\sigma^2 x_1} \exp\left( \int_1^{x_1}  \frac{2(a-bl)}{\sigma^2 l} dl \right),\]
and let $u$ be the unique bounded 
solution (compare Theorem~\ref{thm: solution of PDE}) of the backward equation
\[\left\{\begin{array}{ll}
\D_t u+L u =0 & \mbox{on } (0,\infty)\times D\\
u(T,x)=\frac{\rho(x)}{\mu(x_1)} & \mbox{for }x\in D\\
 \D_t u+ a\D_1 u +\frac{\lambda a}{\sigma}\D_2 u =0 &\mbox{on $(0,T) \times \D D$},\end{array}\right.\]
where 
\[Lu=\frac{\sigma^2 x_1}{2}\D_{11}u + \lambda\sigma x_1\D_{12}u+ \frac{x_1}{2}\D_{22} u+(a-bx_1)\D_1 u+(\frac{2\lambda}{\sigma}(a-bx_1)-\frac{x_1}{2})\D_2 u.\]
Then, by Theorem~\ref{thm: symmetry}, the density is given by 
\[p(t,x)=\mu(x_1)u(T-t,x).\]
\end{example}

\begin{remark}\label{rem}
Another situation in which the above methodology may be useful is in the case of derivative pricing models with stochastic interest
rate. In fact, consider the system
\[\left\{\begin{array}{ll}
dY_t=\beta(Y_t)\,dt + \sigma(Y_t)\,dV_t\\
dZ_t=(Y_t-\frac{\nu^2}{2})\,dt +\nu dW_t,\end{array}\right.\] 
with the interpretation that $Y$ is a stochastic interest rate and $Z$ is the log-price of a risky asset. 
To calculate option prices of the form
\[v=\E\left[\exp\left\{-\int_0^TY_t\,dt\right\}g(Z_{T})\right]\] 
in this model, the density of the process $Z$, killed at the stochastic rate $Y_t$, is needed.
This killed density satisfies a Kolmogorov forward equation; however, if $Y$ is a non-negative diffusion 
(such as in the Cox-Ingersoll-Ross model, see \cite{CIR}), density explosion is expected at the boundary $\{y=0\}$. 
Theorems~\ref{thm: symmetry} and \ref{thm: solution of PDE} can be modified (by adding zero-order terms in the equations) 
in order to cover also the case of derivative pricing models with stochastic interest rates. Note, however, that
the specification of the volatility $\sigma_2=\nu=constant$ suggests that the conditions of 
Assumptions~\ref{as: definition of class S} and \ref{as: 2} are only fulfilled in the case of 
uncorrelated Wiener processes. For ease of presentation, we refrain from including the extension of 
Theorems~\ref{thm: symmetry} and \ref{thm: solution of PDE} to the case of killed processes.
\end{remark}

\section{Proofs of Theorem \ref{thm: symmetry} and Corollary~\ref{cor}}

For the proof of Theorem \ref{thm: symmetry}, we will need the following lemma which is a 
straightforward consequence of \cite[Theorem 2.5]{BAH}.

\begin{lemma}                                 \label{lem: convergence of diffusions}
For $n \in \bN$, let $h^n=(h_1^n,h_2^n)\colon \bR \to \bR^2$ and  $f^n=(f_1^n,f_2^n)\colon \bR \to \bR^2$ be continuous  functions such that:
\begin{itemize}
\item[1)] $h_1^n$ is locally Lipschitz continuous  and $f_1^n$ is  locally $1/2$-H\"older continuous for each $n \in \bN$,
\item[2)] there exists a constant $K$ such that  $ |h_1^n(r)|+|f_1^n(r)| \leq K (1+|r|)$   for all $r \in \bR$ and all $n \in \bN$,
\item[3)] $f_i^n$ and $h_i^n$ converge to $f_i^0$ and $h_i^0$, respectively, uniformly on compact subsets of $\bR$, as $n \to \infty$.
\end{itemize}
Let $(x^n)_{n=0}^\infty \subset \bR^2$, $(t^n)_{n=1}^\infty \subset [0,T]$ be sequences such that $\lim_{n \to \infty}(t^n, x^n)=(t^0,x^0)$, and for each $n \in \bN$, let  $X^n=(Y^n,Z^n)$ be the unique solution of $\Pi(x^n;h^n,f^n)$. 
Then we have the following:
\begin{enumerate}[i]
\item[(i)] It holds that
\begin{equation}                                   \label{eq: convergence in probability}
\sup_{ t \leq T} |X^n_t-X^0_t| \overset{\bP}{\to} 0, \ \text{as $n \to \infty$.}
\end{equation}
\item[(ii)]
Let $g^n, \gamma^n \colon \bR^2 \to \bR$ be continuous functions, bounded and bounded above respectively,  uniformly in $n\in \bN$, such  that  $g^n \to  g^0$ and $\gamma^n \to \gamma^0$ uniformly on compact subsets of $\bR^2$  as $n \to \infty$. Then
\begin{equation}                                     \label{eq: convergence with g}
\lim_{n \to \infty} \E \left[ g^n(X^n_{t^n})e^{ \int_0^{t^n} \gamma^n(X^n_s) \,ds } \right]=  \E \left[ g^0(X^0_{t^0})e^{ \int_0^{t^0} \gamma^0(X^0_s) \,ds } \right].
\end{equation}
\end{enumerate}
\end{lemma}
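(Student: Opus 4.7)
The plan is to deduce part~(i) by combining the one-dimensional convergence theorem of \cite{BAH} applied to the $Y$-component with a direct argument for $Z$, and then to derive part~(ii) from~(i) by dominated convergence.

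For part~(i), the crucial structural feature is that the system is decoupled: the coefficients driving $Y^n$ depend only on $Y^n$. Thus each $Y^n$ solves a genuinely one-dimensional SDE with drift $h_1^n$ (locally Lipschitz) and diffusion $f_1^n$ (locally $1/2$-H\"older), both of linear growth uniformly in $n$. Theorem~2.5 of \cite{BAH} then applies verbatim and yields
$$\sup_{t \leq T} |Y^n_t - Y^0_t| \overset{\bP}{\to} 0 .$$
To extend this to $X^n=(Y^n,Z^n)$, I would write
$$Z^n_t-Z^0_t = (x^n_2-x^0_2)+\int_0^t \bigl[h_2^n(Y^n_s)-h_2^0(Y^0_s)\bigr]\,ds+\int_0^t \bigl[f_2^n(Y^n_s)-f_2^0(Y^0_s)\bigr]\,dW_s,$$
split each integrand as $h_2^n(Y^n_s)-h_2^0(Y^n_s)$ plus $h_2^0(Y^n_s)-h_2^0(Y^0_s)$, and use the uniform-on-compacts convergence $h_2^n\to h_2^0$, $f_2^n\to f_2^0$ together with the continuity of $h_2^0,f_2^0$. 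For the stochastic integral I would localize at the exit time $\tau_R^n:=\inf\{t:|Y^n_t|\geq R\}$; the uniform linear growth of $f_1^n,h_1^n$ gives $\sup_n \bP(\tau_R^n<T)\to 0$ as $R\to\infty$, reducing the problem on $\{\tau_R^n\wedge\tau_R^0\geq T\}$ to uniform convergence of bounded integrands, handled by Doob's $L^2$-inequality.

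For part~(ii), by extracting a subsequence we may assume $\sup_{t\leq T}|X^n_t-X^0_t|\to 0$ almost surely. Combined with $t^n\to t^0$, sample-path continuity of $X^0$, and the uniform-on-compacts convergence of $g^n$ and $\gamma^n$, this gives
$$g^n(X^n_{t^n})\to g^0(X^0_{t^0}) \quad\text{and}\quad \int_0^{t^n}\gamma^n(X^n_s)\,ds \to \int_0^{t^0}\gamma^0(X^0_s)\,ds$$
almost surely. The uniform boundedness of $g^n$ and the uniform upper bound on $\gamma^n$ make the random variables in \eqref{eq: convergence with g} uniformly bounded, so ordinary dominated convergence delivers the result along the subsequence; a standard subsequence argument then promotes convergence to the full sequence. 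The main obstacle is passage to the limit in the stochastic integral in the $Z$-equation under only continuity (not H\"older) of $f_2^n$, but this is exactly what the localization argument in the preceding paragraph resolves, by reducing matters to uniform convergence on a compact interval.
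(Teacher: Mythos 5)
Your proposal is correct and follows essentially the same route as the paper: apply \cite[Theorem 2.5]{BAH} to the decoupled one-dimensional equation for $Y^n$, transfer the convergence to $Z^n$ by splitting the integrands and using uniform-on-compacts convergence plus continuity of the limit coefficients, and deduce (ii) by an almost-sure subsequence extraction and dominated convergence. The only (immaterial) difference is in the stochastic integral: you localize with exit times and invoke Doob's inequality, whereas the paper passes to an a.s.\ convergent subsequence and cites a convergence theorem for stochastic integrals whose integrands' $L_2(dt)$-distance tends to zero almost surely; both are standard and equivalent in substance.
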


\begin{proof}
By  \cite[Theorem 2.5]{BAH} we have 
$\lim_{n \to \infty} \E\sup_{t \leq T}|Y^n_t-Y_t^0|^2 =0$,
and consequently, $\sup_{t \leq T}  |Y_t^{n}-Y^0_t| \overset{\bP}{\to} 0$. Moreover, 
for a subsequence we have
$\lim_{k \to \infty} \sup_{t \leq T}|Y_t^{n_k}-Y_t^0|^2 =0$ almost surely.
This, combined with 3) and the uniform continuity of $h_2^0$ and $f_2^0$ on compacts,  imply that almost surely
$$
\lim_{k \to \infty}\sup_{t \leq T} \left(|h^{n_k}_2(Y_t^{n_k}) -h_2^0(Y_t^0) |+|f_2^{n_k}(Y_t^{n_k}) -f_2^0(Y_t^0) | \right)=0.
$$
In particular, almost surely 
$$
\lim_{k\to \infty} \left( \int_0^T |h^{n_k}_2(Y_t^{n_k}) -h_2^0(Y_t^0) | \, dt +\int_0^T |f_2^{n_k}(Y_t^{n_k}) -f_2^0(Y_t^0) |^2 \, dt \right)=0,
$$
which implies (see, e.g., \cite[Theorem 5, p. 181]{KR1})
\begin{align*}
\sup_{t \leq T}  |Z_t^{n_k}-Z^0_t| & \leq \vert z^{n_k}-z^0\vert +\int_0^T |h^{n_k}_2(Y_t^{n_k}) -h_2^0(Y_t^0) | dt  \\
& + \sup_{t \leq T} \left| \int_0^t \left(f^{n_k}_2(Y_s^{n_k}) -f_2^0(Y_s^0) \right) dW_s \right| \overset{\bP}{\to} 0
\end{align*}
as $k \to \infty$. Moreover, for a further subsequence $n_{k_l}$ the convergence takes place almost surely.  Notice that any subsequences $(x^{n_k})_{k \in \mathbb{N}}$,   $(h^{n_k})_{k \in \mathbb{N}}$ and  $(f^{n_k})_{k \in \bN}$ with $n_0=0$ satisfy  the conditions of the lemma, so the convergence above is true along the whole sequence, that is 
$$
\sup_{t \leq T}  |Z_t^{n}-Z^0_t|\overset{\bP}{\to} 0,
$$
as $n \to \infty$,
which proves \eqref{eq: convergence in probability}.
The equality in \eqref{eq: convergence with g} is a direct consequence of \eqref{eq: convergence in probability}.
\end{proof}

For the proof of Theorem \ref{thm: symmetry},  let $\vartheta \in C^\infty(\bR)$ such that $0 \leq \vartheta \leq 1$, $\vartheta(y) =0$  for $y\leq 0$ and  $\vartheta(y) =1$  for $y\geq 1$. 
Also let
$$
\varrho_m(y)= 1/m+\int_{1/m}^y \vartheta(mr) dr.
$$

\begin{remark}                   \label{rem: properties of rho}
Notice that the function $\varrho_m$ has the following properties:
\begin{itemize}
\item[(1)]
There exist $c_m>0$ such that $\varrho_m(r)> c_m $ for all  $r \in \bR$,
\item[(2)]
$\varrho_m(r)=r$ for $r \geq 1/m$,
\item[(3)]
 $0 \leq \varrho_m(r) \leq 1/m$ for $r \leq  1/m$.
\end{itemize}
\end{remark}

\begin{proof}[Proof of Theorem \ref{thm: symmetry}]
Let us extend the coefficients $\beta$ and $\sigma$ on $(-\infty, 0)$ 
by setting them identically equal to their value at $0$.  Let $\beta^n_i \in C_b^\infty(\bR)$   such that there exists a constant $N$ with $\sup_n |\beta^n_1(r)| \leq N (1+|r|)$ for all $r \in \bR$,
 $\beta^n_1(r)=0$ for $|r| \geq 2n$, and 
\begin{equation}                            \label{eq: conv beta n h n}
\beta^n_i \to \beta_i  \ \text{uniformly on compacts of $\bR$} \  \text{as $n \to \infty$}.
\end{equation}
Let $\sigma^n_i$  be approximations of $\sigma_i$  having the properties of Assumption \ref{as: definition of class S}, and let us set 
$$
\sigma^{n,m}_i := \sigma^n_i \circ \varrho_m,  \  \beta^{n,m}_1 := \beta^n_1 \circ \varrho_m, \ \beta^{n,m}_2:= \beta_2^n
$$
and 
$$
a^{n,m}_{ij}=\lambda_{ij}\frac{\sigma^{n,m}_i\sigma^{n,m}_j}{2}, \qquad  \ h^{n,m}= \lambda \frac{\sigma^{n,m}_2}{\sigma^{n,m}_1}.
$$
We introduce the  differential operators,
\begin{align*}
L_{n,m}\phi&= \sum_{ij} a^{n,m}_{ij} \D_{ij} \phi+ \sum_i \beta^{n,m}_i \D_i \phi \\
\tilde{L}_{n,m}\phi&=  \sum_{ij} a^{n,m}_{ij} \D_{ij} \phi+  \sum_i\tilde{\beta}^{n,m}_i \D_i \phi
\end{align*}
 where $\tilde{\beta}^{n,m}_1=\beta^{n,m}_1$ and $\tilde{\beta}^{n,m}_2$ is defined similarly to \eqref{eq: definition of h tilde} with $\beta_i$ and $\sigma_i$  replaced by $\beta^{n,m}_i$ and $\sigma^{n,m}_i$ respectively. 
 We   consider the equation

\begin{equation} \label{eq: PDE truncated}
  \left\{\begin{array}{ll}
     \D_t v(t,x)&= \tilde{L}_{n,m} v(t,x)     \ \ \ \  \text{ on $ (0,T) \times \bR^2$} \\
         \ v(0,x)&=\frac{\rho(x)}{\mu_{n,m}(x_1)}  \ \ \ \  \ \ \  \ \   \text{on $\bR^2$},
        \end{array} \right.
\end{equation}
where for $r \in \bR$
$$
\mu_{n,m}(r) :=\frac{1}{a^{n,m}_{11}(r)} \exp \left( \int_1^r  \frac{\beta^{n,m}_1(l)}{a^{n,m}_{11}(l)} \, dl \right).
$$
Notice that $\rho/ \mu_{n,m} \in C_c^\infty(\bR^2)$ and that $\tilde{L}_{n,m}$ is strongly elliptic (due to (v) of Assumption \ref{as: definition of class S}), with coefficients of class $C^\infty_b(\bR^2)$. Therefore, equation \eqref{eq: PDE truncated} has a unique solution $v_{n,m} \in \mathbb{W}$, which moreover belongs to $\mathcal{W}$. By the Feynman-Kac formula  we have
\begin{equation}              \label{eq: representation of v n m}
v_{n,m}(t,x) = \E \left[ \frac{\rho(\tilde{X} ^{x;n,m}_t)}{\mu_{n,m}(Y^{x_1;n,m}_t)
} \right],
\end{equation}
where we have denoted by $\tilde{X} ^{x;n,m}=(Y^{x_1;n,m}, \tilde{Z}^{x;n,m})$  the unique solution of $\Pi(x;\tilde{\beta}^{n,m}, \sigma^{n,m})$. Let us now set
$$
q_{n,m}(t,x)= \mu_{n,m}(x_1) v_{n,m}(t,x). 
$$
Notice that since $\sigma^{n,m}_1 \in C^\infty_b (\bR)$, $\sigma^{n,m}_1 \geq 1/n$, $\beta_1^{n,m} \in C^\infty_b ( \bR)$  and  $ \beta^n_1=0$  for  $|r| \geq 2n$, 
we have that $\mu_{n,m}\in C^\infty_b(\bR^2)$ and therefore  $q_{n,m} \in \mathcal{W}$. It is easily seen that $q_{n,m}$ is the unique (in $\mathbb{W}$) solution of 
\begin{equation} 
  \left\{ \begin{array}{ll}
     \D_t v(t,x)&= L^*_{n,m} v(t,x)     \ \ \ \  \text{ on $ (0,T) \times \bR^2$} \\
         \ v(0,x)&=\rho(x)  \ \  \qquad \ \   \ \ \ \   \text{on $\bR^2$},
        \end{array} \right.
\end{equation}
where 
\begin{align*}
L^*_{n,m}\phi=\sum_{ij}\D_{ij}(a^{n,m}_{ij} \phi)-\sum_i\D_i (\beta^{n,m}_i \phi).
\end{align*}
For $g \in C^\infty_c((0,\infty)\times \bR)$, the problem
\begin{equation}
  \left\{ \begin{array}{ll}
     \D_t u(t,x) + L_{n,m} u(t,x)=0     &  \text{on $ (0,T) \times \bR^2$} \\
         \ u(T,x)=g(x)  &  \text{on $\bR^2$}
        \end{array} \right.
\end{equation}
has a unique solution $u_{n,m} \in \mathbb{W}$, for which also holds that $u_{n,m} \in \mathcal{W}$.
By the Feynman-Kac formula we have 
\begin{equation}         \label{eq: representation of u n m }
u_{n,m}(t,x)= \E \left[ g(X_{T-t}^{x;n,m}) \right],
\end{equation}
where by $X^{x;n,m}=(Y^{x_1;n,m}, Z^{x;n,m})$ we have denoted the unique solution of 
$\Pi(x;\beta^{n,m}, \sigma^{n,m})$. 
By the \^Ito formula for $\| \cdot \|^2_{L_2}$  (see, e.g.,  \cite{KR}), and the polarization identity $4ab=(a+b)^2-(a-b)^2$ we get
\begin{align*}
(u_{n,m}(T),q_{n,m}(T))_{L_2}&=(u_{n,m}(0), q_{n,m}(0)) _{L_2} 
 +\int_0^T (L_{n,m}^*q_{n,m}(t),u_{n,m}(t))_{L_2} \ dt \\
& -\int_0^T (L_{n,m}u_{n,m}(t), q_{n,m}(t))_{L_2}\,  dt, 
\end{align*}
and since 
$(L_{n,m}^*q_{n,m}(t),u_{n,m}(t))_{L_2}=(L_{n,m}u_{n,m}(t), q_{n,m}(t))_{L_2}$  for all $t \in [0,T]$,  
we obtain by virtue of \eqref{eq: representation of u n m } that
\begin{equation}                    \label{eq: result for n}
\int_{\bR^2} g(x) q_{n,m}(T,x) dx  = \int_{\bR^2} \E \left[ g(X^{x;n,m}_T) \right]\rho(x) dx.
\end{equation}
We want to let $n \to \infty$ in the above relation. Let us  set
$$
\sigma_i^{\infty,m}:= \sigma_i \circ \varrho_m,  \  \beta^{\infty,m}_1:=\beta_1 \circ \varrho_m, \   \beta^{\infty,m}_2:= \beta_2,
$$
 and we denote by   $X^{x;m}=(Y^{x_1;m}, Z^{x;m})$ and  $\tilde{X}^{x;m}=(Y^{x_1;m}, \tilde{Z}^{x;m})$  the unique solutions of $\Pi(x; \beta^{\infty,m},\sigma^{\infty,m})$ and $\Pi(x; \tilde{\beta}^{\infty,m},\sigma^{\infty,m})$ respectively, where $\tilde{\beta}^{\infty,m}_1=\beta^{\infty,m}_1$, and 
 
$$
 \tilde{\beta}_2^{\infty,m}:= |\sigma_1^{\infty,m}|^2\left(\lambda \frac{ \sigma_2^{\infty,m} }{\sigma_1^{\infty,m} } \right)'+2\lambda \frac{\beta^{\infty,m}_1 \sigma_2^{\infty,m} }{\sigma_1^{\infty,m} }+\beta_2.
  $$
Notice that since $\sigma^n_i \to  \sigma_i$,  and  $\beta^n_i \to \beta_i$,  uniformly on compacts of $(0,\infty)$ and $\bR$ respectively, we have that
\begin{equation}          \label{eq: convergence betai sigmai n}
\sigma^{n,m}_i \to \sigma_i^{\infty,m}    \ \text{and}    \ \beta^{n,m}_i \to \beta_i^{\infty,m}  ,  
\end{equation}
uniformly on compacts of  $\bR$  as $n  \to \infty$.  Moreover, by (v) of Assumption \ref{as: definition of class S} and the properties of $\varrho_m$, there exists a constant $N$ such that
$\sup_n \left(|\beta^n_1(r)| +|\sigma^{n,m}_1(r)|\right) \leq N (1+|r|)$ for any $r \in \bR$.
Therefore, by Lemma \ref{lem: convergence of diffusions} combined with the fact that $\rho$ is compactly supported, we get that
 \begin{align*}
  \lim_{n \to \infty}  \int_{\bR^2}  \E \left[ g(X^{x;n, m}_T)\right] \rho(x) dx 
= \int_{\bR^2}  \E \left[ g(X^{x;m}_T)\right] \rho(x) dx.
 \end{align*} 
 For the left hand side of \eqref{eq: result for n} we proceed as follows. Let us set 
 $$
\mu_m(r)= \frac{2}{|\sigma_1\circ \varrho_m(r)|^2} \exp\left( \int_1^r  \frac{2\beta_1\circ\varrho_m(l)}{|\sigma_1 \circ\varrho_m(l)|^2} dl \right).
 $$
Let $K$ be a compact subset of $(0,\infty)$ and set $K_m:= \varrho_m(K)$ which is also a compact set of $(0, \infty)$. We have 
\begin{align}      
\sup_{r \in K} \left| \frac{1}{|\sigma_1\circ \varrho_m(r)|^2}-\frac{1}{|\sigma^n_1\circ \varrho_m(r)|^2}  \right|= \sup_{r \in K_m} \left| \frac{|\sigma_1(r)|^2-|\sigma^n_1(r)|^2}{|\sigma_1(r)|^2|\sigma^n_1(r)|^2} \right|
\end{align}
By the strict positivity of $\sigma_1$ on $(0, \infty)$ and the uniform convergence $\sigma^n_1 \to \sigma_1$ on the compacts of $(0,\infty)$, there exist $c>0$ such that for all $n$ large enough it holds that $\inf_{ r \in K_m}|\sigma_1(r)|^2|\sigma^n_1(r)|^2 >c$. Consequently,
\begin{equation}              \label{eq:mn1}
\lim_{n \to \infty} \sup_{r \in K} \left| \frac{1}{|\sigma_1\circ \varrho_m(r)|^2}-\frac{1}{|\sigma^n_1\circ \varrho_m(r)|^2}  \right|= 0.
\end{equation}
This combined with the uniform convergence  $\beta^n_1 \to \beta_1$  on the compacts of  $\bR$ gives
\begin{equation}        \label{eq:mn2}
\lim_{n \to \infty} \sup_{r \in K} \left| \frac{2\beta_1\circ\varrho_m(l)}{|\sigma_1 \circ\varrho_m(l)|^2}-\frac{2\beta^n_1\circ\varrho_m(l)}{|\sigma^n_1 \circ\varrho_m(l)|^2}\right| =0.
\end{equation}
By \eqref{eq:mn1} and \eqref{eq:mn2} it follows that 
$\mu_{n,m}  \to \mu_{m}$  uniformly on compacts of $(0, \infty)$,  as $n \to \infty$. Similarly, one can easily see that $1/\mu_{n,m} \to 1/\mu_m$, uniformly on compacts of $(0, \infty)$,  as $n \to \infty$.
 Since $\rho$ is compactly supported in $(0, \infty) \times \bR$, we have that 
\begin{equation}                     \label{eq: uniform conv of mu n m}
\lim_{n \to \infty} \| \rho / \mu_{n,m} -\rho/ \mu_{m}\|_{L_\infty(\bR^2)} =0.
\end{equation}
Moreover, by the strict positivity of $\sigma_1$ on $(0,\infty)$  and (iv) of Assumption \ref{as: definition of class S}, we have that $\sigma_2^n/\sigma^n_1 \to \sigma_2/\sigma_1$ uniformly on compacts of $(0,\infty)$, which implies that 
\begin{equation}             \label{eq: con f/sigma n m}
\frac{\beta^{n,m}_1\sigma_2^{n,m}}{\sigma_1^{n,m}} \to \frac{\beta_1^{\infty,m}\sigma_2^{\infty,m}}{\sigma_1^{\infty,m}},
\end{equation}
uniformly on compacts of $\bR$.
In addition, by (iv) and (vi) of Assumption \ref{as: definition of class S} and the properties of $\varrho_m$ we have that
\begin{equation}           \label{eq: con s2 f/s ' n m}
|\sigma_1^{n,m}|^2\left(\lambda \frac{\sigma_2^{n,m}}{\sigma_1^{n,m}}\right)'=|\sigma_1^{n,m}|^2\left( \left(\lambda \frac{\sigma_2^n}{\sigma^n_1}\right)'\circ \varrho_m \right)  \varrho_m' \to |\sigma_1^{\infty,m} |^2\left(\lambda \frac{ \sigma_2^{\infty,m} }{\sigma_1^{\infty,m} }\right)'
\end{equation}
uniformly on compacts of $\bR$.
Thus, from \eqref{eq: con f/sigma n m}, \eqref{eq: con s2 f/s ' n m}, and the properties of $\beta^n_2$, we get that $\tilde{\beta}_2^{n,m} \to \tilde{\beta}_2^m$ uniformly on compacts of $\bR$.
This combined with \eqref{eq: convergence betai sigmai n} and  \eqref{eq: uniform conv of mu n m},   imply by virtue of Lemma \ref{lem: convergence of diffusions}  that for any $x \in \bR^2$
$$
\lim_{n \to \infty} \E \left[ \frac{\rho(\tilde{X}^{x;n,m}_T)}{ \mu_{n,m}(Y^{x_1;n,m}_T)} \right]=\E \left[ \frac{\rho(\tilde{X}^{x;m}_T)}{ \mu_m(Y^{x_1;m}_T)}  \right]
$$
Notice that  $\rho / \mu_{n,m}$ are bounded uniformly in $n\in \bN$, $\mu_{n,m}  \to \mu_{m} $ uniformly on compacts of $(0, \infty)$,  and $g\in C^\infty_c( (0, \infty) \times \bR)$. Consequently, we obtain by Lebesgue's theorem on dominated convergence that
$$
\lim_{n \to \infty} \int_{\bR^2} g(x) q_{n,m}(T,x) dx =\int_{\bR^2} g(x) q_m(T,x) dx,
$$
where 
$$
q_m(T,x):= \mu_m(x_1)\E \left[ \frac{\rho(\tilde{X}^{x;m}_T)}{ \mu_m(Y^{x_1;m}_T)}\right].
$$
Hence,
\begin{equation}                    \label{eq: result for m}
\int_{\bR^2}  \E \left[ g(X^{x;m}_T) \right] \rho(x) dx= \int_{\bR^2} g(x) q_{m}(T,x) dx.
\end{equation}
Now we want to let $m \to \infty$.
One can easily see that $\sigma_i^{\infty,m}   \to \sigma_i$ and  $\beta_i^{\infty,m}   \to \beta_i$ uniformly on compacts of $\bR$,
 which combined with 
  the properties   of $\beta_1$, $\sigma_1$, and $\varrho_m$,   by virtue of Lemma \ref{lem: convergence of diffusions} together with the boundedness of $g$ and the fact that $\rho$ has compact support imply that
\begin{equation}                               \label{eq: RHS limit}
\lim_{m \to \infty} \int_{\bR^2}  \E \left[ g(X^{x;m}_T)  \right] \rho(x) \, dx =\int_{\bR^2}  \E \left[ g(X^x_T) \right] \rho(x) \, dx
\end{equation}
Notice that 
$$
|\sigma_1^{\infty,m}|^2\left(\lambda \frac{ \sigma_2^{\infty,m} }{\sigma_1^{\infty,m}}\right)'= |\sigma_1|^2\left( \left( \lambda\frac{\sigma_2}{\sigma_1} \right)' \circ \varrho_m \right) \  \varrho_m'.
$$
By (iii) of Assumption \ref{as: definition of class S} and (3) of  Remark \ref{rem: properties of rho}  we have
$$
\sup_{r \in \bR} \left||\sigma_1|^2 \left(\left( \lambda  \sigma_2 /\sigma_1 \right)' \circ \varrho_m \right) \  \varrho_m' (r)-|\sigma_1|^2 \left( \lambda{\sigma_2}/{\sigma_1} \right)' (r) \right| 
$$
\begin{align*}
&= \sup_{r \in [0,1/m]} \left| |\sigma_1|^2 \left( \left(\lambda{\sigma_2}/ {\sigma_1} \right)' \circ \varrho_m \right) \  \varrho_m' (r)-|\sigma_1|^2 \left( \lambda{\sigma_2}/{\sigma_1} \right)' (r) \right| 
\\
&\leq \sup_{r \in [0,1/m]} \left||\sigma_1|^2 \left( \lambda{\sigma_2}/{\sigma_1} \right)' \circ \varrho_m  (r)\right|+\sup_{r \in [0,1/m]} \left||\sigma_1|^2 \left( \lambda {\sigma_2}/{\sigma_1} \right)' (r) \right|  \to 0,
\end{align*}
as $m \to \infty$. In addition,  by (iii) of Assumption \ref{as: definition of class S}, we have that
$$
\lambda \beta^{\infty,m}_1 \sigma_2^{\infty,m} /\sigma_1^{\infty,m} \to \lambda \beta_1 \sigma_2/\sigma_1
$$
uniformly on compacts of $\bR$. Consequently, $\tilde{\beta}^m_2 \to \tilde{\beta}_2$ uniformly on compacts of $\bR$.
As before one can easily check that 
$\mu_m  \to \mu$  and  $1/\mu_m \to  1/\mu$ uniformly on compacts of $(0, \infty)$, 
and that 
$$
\lim_{n \to \infty} \| \rho / \mu_{m} -\rho / \mu\|_{L_\infty(\bR^2)} =0.
$$
Putting these facts together implies by virtue of Lemma \ref{lem: convergence of diffusions} that
$$
\lim_{m \to \infty} \int_{\bR^2} g(x) q_{m}(T,x) dx =\int_{\bR^2} g(x) q(T,x) dx,
$$
which combined with \eqref{eq: RHS limit} brings the proof to an end.
\end{proof}

\begin{proof}[Proof of Corollary~\ref{cor}]
Let us fix $(\xi, x) \in D \times D$. Without loss of generality we can assume that on $\Omega$ there exist $\mathscr{F}_0$-measurable random variables $\eta^n=(\eta^n_1, \eta^n_2)$, $n \in \bN$,  having density $\rho^n(\xi-\cdot)$, where $\rho^n(\zeta)= n^2 \rho (n \zeta)$ for a smooth mollifier $\rho$ supported in the unit ball of $\bR^2$. Let $X^{(n)}=(Y^{(n)}, Z^{(n)})$ be the unique solution of $\Pi(\eta^n; \beta, \sigma)$. Notice that we have almost surely 
$$
 \xi_1- \frac{1}{n} \leq \eta^n_1 \leq \xi_1+ \frac{1}{n}.
$$
Consequently, by a comparison principle (see, e.g., \cite[pp.292]{MR2160585}) we have almost surely for all $t \in [0,T]$
$$
Y^{ \xi_1- \frac{1}{n}}_t \leq Y^{(n)}_t \leq Y^{ \xi_1+ \frac{1}{n}}_t.
$$
By \cite[Theorem 2.5]{BAH} we have that 
$$
\lim_{n \to \infty} \E \sup_{t \leq T} | Y_t^{\xi_1 \pm \frac{1}{n}}-Y^{\xi_1}_t| = 0,
$$
which combined with the above inequality gives
$$
\lim_{n \to \infty} \E \sup_{t \leq T} | Y_t^{(n)}-Y^{\xi_1}_t| = 0.
$$
Then one can easily see (as in the proof of Lemma \ref{lem: convergence of diffusions}) that this implies that $\sup_{t \leq T} | Z_t^{(n)}-Z^\xi_t| \overset{\bP}{\to} 0$, as $n \to \infty$. Consequently, for  $g \in C^\infty_c(D)$ we have
\begin{equation}   \label{eq: convergence expectation}
\lim_{n \to \infty}\E g(X^{(n)}_T)= \E g(X^\xi_T).
\end{equation}

On the other hand, we have
\begin{align*}
\int_D g(x) \mu(x) \E  \left[\frac{ \rho^n(\xi-\tilde{X}_T^x)}{\mu(Y^{x_1}_T)}  \right] \, dx = \int_D g(x) \mu(x) \left( \int_{\bR^2}\rho^n(\xi-\zeta) \frac{\tilde{p}(T,x,\zeta)}{\mu(\zeta)} \,d \zeta \right)  \, dx.
\end{align*}
Next notice that for each $x\in D$
$$
\lim_{n \to \infty} \int_{\bR^2} \rho^n(\xi-\zeta) \frac{\tilde{p}(T,x,\zeta)}{\mu(\zeta)} \,d \zeta = \frac{\tilde{p}(T,x,\xi)}{\mu(\xi)},
$$
and for all $n \geq n_0$ and $x\in\supp (g)$
\begin{eqnarray*}
&& \hspace{-10mm}
\left|  g(x) \mu(x) \int_{\bR^2} \rho^n(\xi-\zeta) \frac{\tilde{p}(T,x,\zeta)}{\mu(\zeta)} \,d \zeta \right|\\
 &\leq& \sup_{x \in \supp(g)}  \sup_{\zeta \in B_{1/{n_0}}(\xi)} \left|\frac{\tilde{p}(T,x,\zeta)}{\mu(\zeta)} \right|| g(x) \mu(x)| < \infty,
\end{eqnarray*}
where $n_0$ is such that $B_{1/{n_0}}(\xi)$ (the ball of radius $1/n_0$ centered at $\xi$) is compactly supported in $D$.  Lebesgue's theorem gives
 $$
 \lim_{n \to \infty} \int_D g(x) \mu(x) \E  \left[\frac{ \rho^n(\xi-\tilde{X}_T^x)}{\mu(Y^{x_1}_T)}  \right] \, dx  = \int_D g(x) \mu(x) \frac{\tilde{p}(T,x,\xi)}{\mu(\xi)} \, dx.
 $$
 This, combined with \eqref{eq: convergence expectation} and Theorem \ref{thm: symmetry} imply that
 $$
  \E g(X^\xi_T) =\int_D g(x) \mu(x) \frac{\tilde{p}(T,x,\xi)}{\mu(\xi)} \, dx.
 $$
 Since $g$ was arbitrary, the claim follows.
\end{proof}

\section{Proof of Theorem \ref{thm: solution of PDE}}

The next proposition is an obvious consequence of Lemma \ref{lem: convergence of diffusions}. 
\begin{proposition}              \label{prop: continuity u}
Under the assumption of Theorem \ref{thm: solution of PDE},  the function $u$ is continuous on $[0,T] \times \overline{D}$.
\end{proposition}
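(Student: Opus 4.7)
The plan is to read off continuity of $u$ as an essentially immediate application of part (ii) of Lemma~\ref{lem: convergence of diffusions}, using the \emph{constant} coefficient sequences $h^n \equiv \beta$ and $f^n \equiv \sigma$ and letting only the initial conditions and time points vary.

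First, I would verify that Assumption~\ref{as: 2} supplies all the hypotheses needed to invoke the lemma. By (\ref{as2: regularity}) of Assumption~\ref{as: 2}, $\beta_1 \in C^1([0,\infty))$ with bounded derivative, so $\beta_1$ is (globally, hence locally) Lipschitz. For $\sigma_1$, the key observation is that $a_{11} = \sigma_1^2/2 \in C^1([0,\infty))$, so $\sigma_1^2$ is locally Lipschitz; combined with $\sigma_1 \geq 0$ this gives, for any $R > 0$ and $r,r' \in [0,R]$,
\[
|\sigma_1(r)-\sigma_1(r')|^2 \;\leq\; |\sigma_1(r)-\sigma_1(r')|\,\bigl(\sigma_1(r)+\sigma_1(r')\bigr) \;=\; \bigl|\sigma_1(r)^2-\sigma_1(r')^2\bigr| \;\leq\; C_R\,|r-r'|,
\]
so $\sigma_1$ is locally $1/2$-H\"older. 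The uniform linear growth of $\beta_1, \sigma_1$ comes from (\ref{as2: linear growth}) of Assumption~\ref{as: 2}. Since the coefficients do not depend on $n$, the uniform-on-compacts convergence in condition 3) of Lemma~\ref{lem: convergence of diffusions} is trivial.

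Next, given any sequence $(t_n, x_n) \in [0,T] \times \overline{D}$ with $(t_n, x_n) \to (t_0, x_0)$, set $\tau_n := T - t_n$ so that $\tau_n \to \tau_0 := T - t_0$ in $[0,T]$. Applying Lemma~\ref{lem: convergence of diffusions}(ii) with $h^n \equiv \beta$, $f^n \equiv \sigma$, $g^n \equiv g$, $\gamma^n \equiv 0$ and time points $t^n = \tau_n$ (the function $g \in C^\infty_c(D)$ is bounded, so the boundedness hypothesis on $g^n$ is satisfied), we obtain
\[
u(t_n, x_n) \;=\; \mathbb{E}\bigl[g(X^{x_n}_{\tau_n})\bigr] \;\longrightarrow\; \mathbb{E}\bigl[g(X^{x_0}_{\tau_0})\bigr] \;=\; u(t_0, x_0),
\]
which yields the joint continuity of $u$ on $[0,T] \times \overline{D}$.

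There is essentially no obstacle: the only slightly non-routine point is the short argument that $\sigma_1$ is $1/2$-H\"older near the boundary $\{r=0\}$, which is forced by the combination $a_{11} \in C^1([0,\infty))$ and $\sigma_1(0)=0$; everything else is a direct invocation of the already-established convergence lemma.
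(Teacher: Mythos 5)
Your proposal is correct and follows exactly the paper's route: the paper simply declares the proposition ``an obvious consequence of Lemma~\ref{lem: convergence of diffusions}'', i.e.\ part (ii) of that lemma applied with constant coefficient sequences and varying $(t^n,x^n)$. The only detail worth noting is that you should (as the paper does elsewhere) extend $\beta,\sigma$ to $(-\infty,0)$ by their values at $0$ so the lemma's hypotheses on all of $\bR$ apply; your verification that $a_{11}\in C^1([0,\infty))$ together with $\sigma_1\geq 0$ forces local $1/2$-H\"older continuity of $\sigma_1$ is a correct filling-in of a step the paper leaves implicit.
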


\begin{proposition}                    \label{pr: C 1 2}
Under the assumption of Theorem \ref{thm: solution of PDE}, the function $u$ belongs to $C^{1,2}((0,T) \times D)$ and satisfies 
$\D_t u+Lu=0$ for all  $(t,x) \in  (0,T) \times D$.
\end{proposition}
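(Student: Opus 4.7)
My plan is a localization argument: for each $(t_0, x_0) \in (0,T) \times D$, I would identify $u$ with the classical solution of a uniformly parabolic Dirichlet problem on a small cylinder around $x_0$, and read off the regularity and the PDE from there.

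Choose $r > 0$ small enough that the closed rectangle $R := [x_{0,1}-r, x_{0,1}+r] \times [x_{0,2}-r, x_{0,2}+r]$ is contained in $D$, which is possible since $x_{0,1} > 0$. On $R$, Assumption~\ref{as: 2}(\ref{as2: continuity}) makes $\beta_i$ and $\sigma_i$ smooth and bounded, and since $\sigma_1 > 0$ on $(0,\infty)$ by Assumption~\ref{as: 2}(\ref{as2: positivity}), the operator $L$ is uniformly elliptic on $R$. With $Q := (0,T) \times R^\circ$, I would consider the Dirichlet problem
\begin{equation*}
\D_t v + Lv = 0 \text{ in } Q, \quad v(T,\cdot) = g \text{ on } R, \quad v = u \text{ on } (0,T) \times \D R.
\end{equation*}
The lateral data $u$ is continuous by Proposition~\ref{prop: continuity u}, compatible at the corner $\{T\} \times \D R$ since $u(T,\cdot) = g$, and together with the smoothness and uniform parabolicity on the regular domain $R$, classical parabolic Dirichlet theory yields a unique solution $v \in C^{1,2}(Q) \cap C(\overline{Q})$.

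The key step is matching $v$ and $u$ on $Q$. Fix $(t,x) \in Q$ and set $\tau := \inf\{s \geq 0 : X^x_s \notin R^\circ\}$. Since $v \in C^{1,2}(Q)$ and $X^x$ remains in the compact set $R$ up to $\tau$, applying It\^o's formula to $s \mapsto v(t+s, X^x_s)$ on $[0, \tau \wedge (T-t)]$ and using $\D_t v + Lv = 0$ yields
\begin{equation*}
v(t,x) = \E\!\left[u(t+\tau, X^x_\tau)\mathbf{1}_{\{\tau < T-t\}}\right] + \E\!\left[g(X^x_{T-t})\mathbf{1}_{\{\tau \geq T-t\}}\right].
\end{equation*}
Applying the strong Markov property of $X$ (which holds under Assumption~\ref{as: 2} since pathwise uniqueness is standard for this class) at the stopping time $\tau \wedge (T-t)$ to $u(t,x) = \E[g(X^x_{T-t})]$ produces exactly the same expression. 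Hence $u = v$ on $Q$, so $u \in C^{1,2}$ in a neighborhood of $(t_0, x_0)$ and $\D_t u + Lu = 0$ there. Since $(t_0, x_0) \in (0,T) \times D$ was arbitrary, the proof is complete.

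The main technical obstacle is invoking the right classical solvability statement for the Dirichlet problem with merely continuous lateral data; this is standard for uniformly parabolic operators with smooth coefficients on domains satisfying an exterior ball condition (which the rectangle $R$ plainly does), and I would cite Friedman's monograph on parabolic PDEs rather than reconstruct the argument. A secondary point of care is checking that the corner compatibility $u(T,\cdot)=g$ on $\D R$ holds (immediate from the definition of $u$) and that the local martingale obtained from It\^o's formula is a genuine martingale on the stochastic interval $[0,\tau\wedge(T-t)]$ because $\nabla v$ and $\sigma(X^x)$ remain bounded up to $\tau$.
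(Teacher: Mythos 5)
Your proposal is essentially the paper's own proof: localize around $(t_0,x_0)$ in a rectangle where $L$ is uniformly parabolic with smooth coefficients, solve the Dirichlet problem with boundary data $u$, and identify the classical solution with $u$ via It\^o's formula and the strong Markov property. The one place where your justification is off is the martingale step: you assert that $\nabla v$ remains bounded up to the exit time $\tau$, but classical solvability with merely continuous lateral data gives $v\in C^{1,2}(Q)\cap C(\overline Q)$ only, so $\nabla v$ need not be bounded near the lateral boundary and It\^o's formula cannot be applied directly on $[0,\tau\wedge(T-t)]$. The paper avoids this by first stopping at the exit time $\tau^\varepsilon$ from the $\varepsilon$-shrunken cylinder $\mathcal{R}_\varepsilon$, where $v$ is genuinely $C^{1,2}$, noting the stopped process is a bounded local martingale (boundedness of $v$ alone suffices), and then letting $\varepsilon\downarrow 0$ using $\tau^\varepsilon\uparrow\tau$ and the continuity of $v$ up to the parabolic boundary. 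With that standard modification your argument is complete and coincides with the paper's.
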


\begin{proof}
Let $(t,x) \in(0,T)\times D$, let $Q \subset \subset D$ be an open rectangle containing $x$,  and set  $\mathcal{R}=(0,T) \times Q$.
The problem 
\begin{equation} \label{eq:parabolic pde}
  \left\{ \begin{array}{ll}
         \D_t f  + L f=0  & \mbox{in $\mathcal{R}$};\\

        f=u &\mbox{on $\D_p \mathcal{R}$}
        \end{array} \right. 
        \end{equation}
        has a unique classical solution (since $L$ has smooth coefficients and is strongly elliptic in $\mathcal{R}$). Let $X^{x,t}$ be the solution of \eqref{eq: main SDE} starting from $x$ at time $t$.  For $\varepsilon>0$, set
\begin{align*}                          
\tau^\varepsilon &= \inf\{ s \geq t \ |\    (s,X^{x,t}_s) \notin \mathcal{R}_\varepsilon\}, \\
\tau &= \inf\{ s \geq t \ |\    (s,X^{x,t}_s) \notin \mathcal{R}\},
\end{align*}
where $\mathcal{R}_\varepsilon:= \{ (s,y) \in \mathcal{R}\ | \ \dist ((s,y), \D \mathcal{R})> \varepsilon\}$. By Ito's formula we have that the process  $(f(s\wedge \tau^\varepsilon , X_{s\wedge \tau^\varepsilon}^{x,t}))_{s \geq t}$,  is a local martingale and bounded (since $f$ is bounded), hence a martingale. Thus, for any $\varepsilon>0$, $s\geq t$,
 $$
 f(t,x)= \E f(s\wedge \tau^\varepsilon , X_{s\wedge \tau^\varepsilon}^{x,t}), 
 $$
 which by letting $\varepsilon \downarrow 0$, by virtue of the continuity  of $f$ up to the parabolic boundary and due to the fact that $\tau_\varepsilon \uparrow \tau$, gives
 $$
 f(t,x) = \E f(s\wedge \tau, X^{x,t}_{s\wedge \tau} ).
 $$
 Choosing $s=T$ in the above equality gives 
 $$
 f(t,x)= \E f( \tau, X^{x,t}_{\tau} )=\E u( \tau, X^{x,t}_{\tau} )=\E g(X^{x,t}_T)=u(t,x)
 $$
where the second equality follows from the fact that $f=u$ on $\D_p\mathcal{R}$ and the third equality follows from the strong Markov property. As $(t,x) \in (0,T) \times D$ and $Q \subset \subset D$ were arbitrary, this brings the proof to an end. 
\end{proof}

\begin{proposition}                              \label{pr: representation derivatives}
Under Assumption \ref{as: 2}, we have $\D_2u(t,x) = \E(\D_2 g) (X^x_{T-t})$ and 
$ \D_{22}u(t,x) = \E( \D_{22}g) (X^x_{T-t})$. In particular, $\D_2 u, \D_{22}u \in C([0,T] \times \overline{D})$. 
\end{proposition}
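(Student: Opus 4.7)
The plan is to exploit the translation invariance of system \eqref{eq: main SDE} in the $x_2$-direction. Since the coefficients $\beta_i$ and $\sigma_i$ depend only on $Y$, the $Y$-component of the solution is independent of $x_2$, and $x_2$ enters the $Z$-equation only as an additive shift of the initial datum. I expect this to turn $x_2$-differentiation of $u$ into $x_2$-differentiation of $g$ under the expectation.

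First, I would verify, using pathwise uniqueness for $\Pi((x_1,x_2);\beta,\sigma)$, that for all $(x_1,x_2)\in \overline{D}$ and all $t \in [0,T]$,
\[
X_t^{(x_1,x_2)} \;=\; \bigl(Y_t^{x_1},\ Z_t^{(x_1,0)} + x_2\bigr) \quad \text{almost surely},
\]
since the right-hand side manifestly satisfies the SDE \eqref{eq: main SDE} with initial condition $(x_1,x_2)$. Substituting into the definition of $u$ in Theorem~\ref{thm: solution of PDE} yields
\[
u(t,x_1,x_2) \;=\; \E\, g\!\left(Y_{T-t}^{x_1},\ Z_{T-t}^{(x_1,0)} + x_2\right).
\]

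Second, because $g \in C^\infty_c(D)$, the partial derivatives $\D_2 g$ and $\D_{22} g$ lie in $C^\infty_c(D)$ and are in particular bounded. The $x_2$-difference quotients of the integrand above are therefore dominated by $\|\D_2 g\|_{L_\infty}$ and $\|\D_{22} g\|_{L_\infty}$, and converge pointwise to $(\D_2 g)(Y_{T-t}^{x_1},Z_{T-t}^{(x_1,0)}+x_2)$ and $(\D_{22} g)(Y_{T-t}^{x_1},Z_{T-t}^{(x_1,0)}+x_2)$, respectively. Dominated convergence then permits differentiating under the expectation twice, giving
\[
\D_2 u(t,x) \;=\; \E\,(\D_2 g)(X_{T-t}^x), \qquad \D_{22} u(t,x) \;=\; \E\,(\D_{22} g)(X_{T-t}^x).
\]

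Finally, continuity on $[0,T]\times \overline{D}$ follows for free: since $\D_2 g$ and $\D_{22} g$ belong to $C^\infty_c(D)$, an application of Proposition~\ref{prop: continuity u} with these functions in place of $g$ yields that the two expectations above are continuous in $(t,x)\in[0,T]\times \overline{D}$. I do not anticipate any real obstacle here; the translation invariance in $x_2$ sidesteps the boundary degeneracy of $\sigma_1$ at $Y=0$ that will make the corresponding analysis of $\D_1 u$ considerably more delicate.
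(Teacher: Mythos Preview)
Your proposal is correct and follows essentially the same approach as the paper: the paper's proof simply notes that $X^x=(Y^{x_1},Z^x)$ with $Z^x=x_2+f(Y^{x_1})$ for some functional $f$, so that differentiation in $x_2$ passes directly onto $g$, which is exactly your translation-invariance argument spelled out in more detail. Your use of Proposition~\ref{prop: continuity u} for the continuity claim is a clean way to finish.
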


\begin{proof}
The result follows immediately from straightforward differentiation, from the fact that $X^x=(Y^{x_1}, Z^{x})$,  where 
$Z^{x}= x_2+f(Y^{x_1})$
for some functional $f$, combined with the fact that $g \in C_c^\infty((0, \infty) \times \bR)$. 
\end{proof}

We proceed with the continuity of $\D_1 u$ up to the boundary  $\D D$. If we formally differentiate the equation
$\D_tu + Lu =0$ with respect to $x_1$, we obtain
$$
\D_t (\D_1 u)+\hat{L}(\D_1 u)+f =0,
$$
where the operator $\hat{L}$ is given by 
\begin{equation}                  \label{eq: operator hat}
\begin{aligned}
\hat{L} \phi := \sum_{i,j} a_{ij}\D_{ij}\phi+\sum_i \hat{\beta}_i \D_i \phi +c \phi,
\end{aligned}
\end{equation}
with $\hat{\beta}_i=\beta_i+i\D_1a_{1i}$, $c=\D_1\beta_1$,  and the free term $f$ is given by
\begin{equation}                           \label{eq: free term}
f= (\D_1\beta_2) \D_2 u+ (\D_1 a_{22})\D_{22}u.
\end{equation}
For any $x \in  \overline{D}$, let  $\hat{X}^x =(\hat{Y}^{x_1}, \hat{Z}^x)$ be the unique solution of $\Pi(x;\hat{\beta},\sigma)$, where $\hat{\beta}=(\hat{\beta}_1, \hat{\beta}_2)$, and notice that $\D_1a_{11}(0) \geq 0$ (since $\sigma_1(0)=0$). Consequently, for all $t \in [0,T]$ we have that  $\hat X^x_t \in \ \overline{D}$.  
Let us set 
\begin{equation}            \label{eq: definition of v}
\begin{aligned}
v(t,x) :=& \E\left[ (\D_1g)(\hat{X}^{x}_{T-t})\exp\left(\int_0^{T-t} c(\hat{X}^x_s)\, ds\right)\right]
\\
 +& \E \left[ \int_0^{T-t} f(t+s, \hat{X}^{x}_s)\exp \left(\int_0^s c(\hat{X}^x_r)\, dr \right) \, ds  \right].
\end{aligned}
\end{equation}

To prove that $\D_1 u$  is continuous on $[0,T] \times \overline{D}$ we show that $v$ is continuous and that $v=\D_1 u$.

\begin{proposition}
Under Assumption \ref{as: 2}, the function $v$ defined in \eqref{eq: definition of v} is continuous on $[0,T] \times \overline{D}$. 
\end{proposition}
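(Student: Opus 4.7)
The plan is to derive continuity of $v$ from continuity of the map $x \mapsto \hat X^x$ in the sense of Lemma \ref{lem: convergence of diffusions}, combined with the dominated convergence theorem. Fix $(t_n, x_n) \to (t_0, x_0)$ in $[0,T] \times \overline{D}$, and extend the coefficients of $\Pi(x; \hat{\beta}, \sigma)$ to all of $\bR$ by their values at $0$. This extension does not alter $\hat X^x$ because $\sigma_1(0)=0$ together with $\hat{\beta}_1(0) = \beta_1(0) + \D_1 a_{11}(0) \geq 0$ (the latter since $a_{11}\geq 0$ and $a_{11}(0)=0$) forces $\hat Y^{x_1} \geq 0$, as noted in the excerpt.

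First I would verify the hypotheses of Lemma \ref{lem: convergence of diffusions} for the constant sequences $h^n \equiv \hat{\beta}$, $f^n \equiv \sigma$: under Assumption \ref{as: 2}, $\beta_1$ is Lipschitz (bounded $\beta_1'$) and $\D_1 a_{11}$ is locally Lipschitz, so $\hat\beta_1$ is locally Lipschitz; moreover, $\sigma_1 = \sqrt{2a_{11}}$ is locally $1/2$-Hölder, being the square root of the nonnegative $C^1$ function $2a_{11}$ vanishing at $0$. The linear growth bounds follow from Assumption \ref{as: 2}(\ref{as2: linear growth}) and the linear growth of $a_{11}'$. Lemma \ref{lem: convergence of diffusions}(i) then gives $\sup_{s\leq T}|\hat X^{x_n}_s-\hat X^{x_0}_s| \overset{\bP}{\to} 0$.

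For the first summand of $v$, I would apply Lemma \ref{lem: convergence of diffusions}(ii) directly with $g^n = \D_1 g \in C^\infty_c(D)$ (bounded, continuous) and $\gamma^n = c = \beta_1'$ (bounded by Assumption \ref{as: 2}(\ref{as2: regularity})). This yields convergence of the first term of $v(t_n,x_n)$ to that of $v(t_0,x_0)$. For the second summand, rewrite it as $\int_0^T G_n(s)\,ds$ with
$$G_n(s) := \mathbf{1}_{\{s \leq T-t_n\}}\,\E\!\left[f(t_n+s,\hat X^{x_n}_s)\exp\!\left(\int_0^s c(\hat X^{x_n}_r)\,dr\right)\right].$$
The crucial point is that, by Proposition \ref{pr: representation derivatives}, $\D_2 u(t,x) = \E(\D_2 g)(X^x_{T-t})$ and $\D_{22}u(t,x) = \E(\D_{22}g)(X^x_{T-t})$ are \emph{continuous and uniformly bounded} on $[0,T]\times\overline{D}$ (since $\D_2 g, \D_{22}g$ are bounded); combined with the boundedness of $\beta_2'$ and $a_{22}'$ from Assumption \ref{as: 2}(\ref{as2: regularity}), the free term $f$ defined in \eqref{eq: free term} belongs to $C([0,T]\times\overline{D})$ and is bounded. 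Hence $|G_n(s)| \leq \|f\|_\infty e^{T\|c\|_\infty}$ uniformly, and for each $s \neq T-t_0$ either $s$ eventually exceeds $T-t_n$ (so $G_n(s)=G_0(s)=0$) or Lemma \ref{lem: convergence of diffusions}(ii) applies with $g^n(\cdot) = f(t_n+s,\cdot)$ converging uniformly on compacts to $f(t_0+s,\cdot)$ by the joint continuity of $f$, yielding pointwise convergence $G_n(s)\to G_0(s)$. Dominated convergence in $s$ concludes.

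The main obstacle is identifying the correct regularity of $\hat\beta_1$ and $\sigma_1$ (especially the local $1/2$-Hölder continuity near the boundary where $a_{11}$ degenerates) that brings the problem into the scope of Lemma \ref{lem: convergence of diffusions}, and extracting the uniform bound on $f$ from the stochastic representations of $\D_2 u$ and $\D_{22}u$ in Proposition \ref{pr: representation derivatives}; everything else is a routine dominated-convergence argument handling the variable upper limit $T-t_n$ in the time integral.
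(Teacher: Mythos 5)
Your proposal is correct and follows essentially the same route as the paper: convergence $\hat X^{x_n}\to\hat X^{x_0}$ via Lemma \ref{lem: convergence of diffusions}, continuity and boundedness of $\D_1 g$, $c$ and of the free term $f$ (the latter via Proposition \ref{pr: representation derivatives}), and dominated convergence in $s$ to handle the time integral with its moving upper limit. You are somewhat more explicit than the paper in verifying the hypotheses of Lemma \ref{lem: convergence of diffusions} for $\hat\beta_1$ and $\sigma_1$, which is a welcome addition rather than a deviation.
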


\begin{proof}
Let $(t^n,x^n) \in [0,T] \times \overline{D}$, $n \in \bN$,  converging to $(t,x)\in [0,T] \times \overline{D}$. By \eqref{eq: convergence in probability} of Lemma \ref{lem: convergence of diffusions} combined with the continuity and the boundedness of $\D_1g$ and $c$, we have
\begin{align*}
\lim_{n \to \infty} &\E\left[ (\D_1g)(\hat{X}^{x^n}_{T-t^n})\exp\left(\int_0^{T-t^n} c(\hat{X}^{x^n}_s)\, ds\right)\right]
\\
=&\E\left[ (\D_1g)(\hat{X}^{x}_{T-t})\exp\left(\int_0^{T-t} c(\hat{X}^x_s)\, ds\right)\right].
\end{align*}
Also, by \eqref{eq: convergence in probability} and  the continuity of $f$ and $c$ we have that
\begin{align*}
&I_{s< T-t^n}f(t^n+s, \hat{X}^{x^n}_s)\exp\left(\int_0^sc(\hat{X}^{x^n}_r) \,dr\right)  \\
\to & I_{s< T-t}f(t+s, \hat{X}^{x}_s)\exp\left(\int_0^sc(\hat{X}^{x}_r) \,dr\right),
\end{align*}
in measure (on $(0,T) \times \Omega$) as $n \to \infty$ (where we have set $f(r,\cdot)=f(T,\cdot)$ for $r >T$). The result now follows by the boundedness of $c$ and $f$ (recall \eqref{as2: regularity} from Assumption \ref{as: 2}). 
\end{proof}

For the proof of the next proposition we will need to define some approximation functions. Let $\zeta^n_i \in C^\infty(\bR)$ with $0 \leq \zeta^n_i \leq 1$ such that
\begin{enumerate}
\item $\zeta^n_1= 1$ on $(-\infty,1/(2n)]$, $\zeta^n_1= 0$ on $(1/n, \infty]$, $\zeta^n_1 >0$ on $(1/(2n), 1/n)$, and $|\D \zeta^n_1|\leq N n$ 

\item $\zeta^n_2= 1-\zeta^n_1$ on $(-\infty, n]$, $\zeta^n_2= 0$ on $[n^4, \infty)$, $\zeta^n_2 >0$ on $(n,n^4)$, and $|\D \zeta^n_2|\leq n^{-4}$ on $(n, n^4)$

\item  $\zeta^n_3= 0$ on $(-\infty,n]$, $\zeta^n_3= 1
$ on $(n^4, \infty]$, $\zeta^n_3 >0$ on $(n, n^4)$, and $|\D \zeta^n_3
|\leq N n^{-4} $ 
\end{enumerate} 

For $i \in \{1,2\}$ 
let us extend $\sigma_i$ and $\beta_i$ on $\bR$ by setting $\sigma_i(r)=\sigma_i(0)$ and $\beta_i(r)=\beta_i(0)$ for $r <0$, and
let us  set 
\[\sigma^n_i(r):=\left[2a_{ii}\left(\frac{1}{2n}\right) \zeta^n_1(r)+ 2a_{ii}(r) \zeta^n_2(r)+ \zeta^n_3(r)\right]^{1/2} ,\]
\[\beta_i^n(r)= \beta_i(\frac{1}{2n}) \zeta_1^n(r)+ \beta_i(r)\zeta_2^n(r),\] 
and as usual
$$
a^n_{ij}:= \lambda_{ij} \frac{\sigma^n_i \sigma^n_j}{2}.
$$

\begin{proposition}            \label{pr: continuous derivative x1}
Under Assumption \ref{as: 2},  we have $\D_1 u \in C([0,T] \times \overline{D})$. 
\end{proposition}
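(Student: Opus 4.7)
The goal is to show that the interior derivative $\partial_1 u$, which exists by Proposition \ref{pr: C 1 2}, extends continuously to $[0,T]\times\overline{D}$. Since the preceding proposition already shows that the candidate function $v$ defined in \eqref{eq: definition of v} is continuous on $[0,T]\times\overline{D}$, the plan is to establish the identity $\partial_1 u = v$ on the interior $(0,T)\times D$; the continuous extension of $\partial_1 u$ then follows at once.

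The identification is made via the smoothed problems with coefficients $\beta^n,\sigma^n$ introduced just above the proposition. These coefficients are smooth and bounded with $\sigma^n_1$ bounded away from zero on all of $\bR$, so standard parabolic theory gives smooth bounded classical solutions $u^n(t,x)=\E[g(X^{x,n}_{T-t})]$ on $[0,T]\times\bR^2$. Differentiating the PDE in $x_1$ shows that $w^n:=\partial_1 u^n$ solves the linear parabolic equation $\partial_t w^n+\hat{L}^n w^n+f^n=0$ with terminal data $\partial_1 g$, where $\hat{L}^n$ and $f^n$ are built from the $n$-th coefficients precisely as $\hat{L}$ and $f$ are built from the originals in \eqref{eq: operator hat}--\eqref{eq: free term}. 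The Feynman--Kac formula then yields $\partial_1 u^n=v^n$, the natural analogue of $v$ for the smoothed problem.

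It remains to pass to the limit $n\to\infty$ in the identity $\partial_1 u^n=v^n$ at each interior point. On the left-hand side, interior Schauder (or $W^{2,p}$) estimates applied on any compact $K\subset(0,T)\times D$---where the operators are uniformly parabolic and the $n$-th coefficients eventually agree with the originals---yield uniform $C^{1,2}(K)$ bounds on $u^n$, and combined with the pointwise convergence $u^n\to u$ from Lemma \ref{lem: convergence of diffusions}, give $\partial_1 u^n\to\partial_1 u$ locally uniformly in the interior. On the right-hand side, Lemma \ref{lem: convergence of diffusions} together with the product structure exploited in Proposition \ref{pr: representation derivatives} gives $\partial_2 u^n\to\partial_2 u$ and $\partial_{22}u^n\to\partial_{22}u$ pointwise, hence $f^n\to f$ pointwise, and once the convergence $\hat{X}^{x,n}\to \hat{X}^x$ is established one obtains $v^n(t,x)\to v(t,x)$ at interior points. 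Comparing the two limits yields $\partial_1 u=v$ on $(0,T)\times D$, and continuity of $\partial_1 u$ on $[0,T]\times\overline{D}$ follows from the continuity of $v$.

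The main technical obstacle is the convergence $\hat{X}^{x,n}\to\hat{X}^x$. The drift $\hat{\beta}^n_1=\beta^n_1+\partial_1 a^n_{11}$ satisfies $\partial_1 a^n_{11}\equiv 0$ on $[0,1/(2n)]$ by construction, whereas $\partial_1 a_{11}(0)=a_{11}'(0)$ need not vanish; consequently $\hat{\beta}^n_1$ fails to converge to $\hat{\beta}_1$ uniformly on compact subsets of $[0,\infty)$, and Lemma \ref{lem: convergence of diffusions} cannot be applied directly to $\hat{X}^{x,n}$. At interior starting points $x_1>0$, however, the discrepancy is bounded and supported in the shrinking set $\{Y^{x_1,n}\leq 1/(2n)\}$, so a Yamada--Watanabe type $L^1$-estimate combined with an occupation-time bound for this set should show that the drift mismatch contributes a term tending to zero in probability, which suffices for the required pointwise convergence $v^n(t,x)\to v(t,x)$ at interior points.
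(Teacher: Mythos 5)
Your overall architecture is exactly the paper's: approximate by the smoothed problems with coefficients $\beta^n,\sigma^n$, represent $v^n:=\D_1u^n$ by Feynman--Kac through the derived equation $\D_t v^n+\hat L_nv^n+f^n=0$, show $\D_1u^n\to\D_1u$ in the interior by interior parabolic estimates on compacta where $L_n=L$ (the paper uses an $L_2$ energy estimate on the gradients rather than Schauder bounds, but this is immaterial), show $v^n\to v$ pointwise, and conclude $\D_1u=v$ with $v$ continuous up to $\D D$ by the preceding proposition. You also correctly isolate the one genuinely delicate point: $\hat\beta_1^n=\beta_1^n+\D_1a_{11}^n$ does \emph{not} converge to $\hat\beta_1$ uniformly near $0$ (since $\D_1a^n_{11}\equiv0$ on $[0,1/(2n)]$ while $a_{11}'(0)$ may be positive), so Lemma~\ref{lem: convergence of diffusions} cannot be applied to $\hat X^{x;n}$.

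Your proposed resolution of that point, however, has a real gap. The claim that ``an occupation-time bound for $\{Y^{x_1,n}\le 1/(2n)\}$ should show that the drift mismatch contributes a term tending to zero'' is not justified, and it cannot be true in the generality in which you state it: if $0$ were attainable by $\hat Y^{x_1}$ (note its drift is $\beta_1+\D_1a_{11}$, not $\beta_1$), the occupation time of $[0,1/n]$ would not vanish and the Yamada--Watanabe/Gronwall estimate would not close. Moreover, a uniform-in-$n$ occupation-time estimate for the approximating processes $\hat Y^{x_1;n}$ (whose diffusion coefficient near $0$ is the possibly tiny constant $\sqrt{2a_{11}(1/(2n))}$) is itself nontrivial and is nowhere sketched; you would also have to control the mismatch of the diffusion coefficients on $[0,1/n]$ and on $[n,\infty)$, not only the drift. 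The paper avoids all of this with a softer argument that you do not use: on the stochastic interval $[\![0,\tau_n]\!]$, with $\tau_n$ the exit time of $\hat Y^{x_1}$ from $(1/n,n)$, the coefficients of the two systems coincide and are Lipschitz, so $\hat X^{x;n}=\hat X^x$ there by pathwise uniqueness; and condition~(\ref{as2: exit}) of Assumption~\ref{as: 2} is precisely Feller's test guaranteeing that $0$ is not an exit boundary for $\hat Y^{x_1}$, whence $\inf_{t\le T}\hat Y^{x_1}_t>0$ a.s.\ for $x_1>0$ and $\tau_n=T$ for all large $n$ almost surely. This is where hypothesis~(\ref{as2: exit}) enters the proof; your argument never invokes it, which is a sign that the sketch as written cannot be completed. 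To repair your proof, replace the occupation-time heuristic by this pathwise identification up to $\tau_n$ together with the Feller-test consequence of Assumption~\ref{as: 2}(\ref{as2: exit}).
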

\begin{proof}
For $n \in \bN_+ $ let $\sigma^n_i$ and $\beta^n_i$ be the functions defined above. Under Assumption \ref{as: 2} it is not difficult to see that $\sigma_i^n, \beta_i^n \in  C_b^\infty (\bR)$, $\inf_{\bR} \sigma^n_i >0$, and the following hold:
\begin{enumerate}[i]
\item[(i)] $\sigma^n_i=\sigma_i$ and $\beta_i^n=\beta_i$ on $[1/n,n]$,

\item[(ii)] $\beta^n_i \to \beta_i$ and $\sigma^n_i \to \sigma_i$ uniformly on compacts subsets of $\bR$ as $n \to \infty$,
and there exists a constant $N$ such that $|\beta_1^n(r)|+|\sigma_1^n(r)| \leq N(1+|r|)$, for all $n \in \bN_+$, $r\in \bR$

\item[(iii)]  $\left( a_{22}^n\right)'$, $(\beta_1^n)'$, and  $(\beta_2^n)'$ are bounded, uniformly in $n \in \bN_+$.

\end{enumerate}
Let us set $\beta^n:=(\beta^n_1, \beta^n_2)$, $\sigma^n:=(\sigma^n_1, \sigma^n_2)$, and for every $n \in \bN_+$ let $L_n$  denote the generator of $\Pi(\cdot; \beta^n, \sigma^n)$. For every $n \in \bN_+$,  the equation 
\begin{equation} 
  \left\{ \begin{array}{ll}
         \D_t u^n  + L_n u^n=0  & \mbox{in $(0,T)  \times \bR^2$};\\
        u^n(T,x)=g(x) &\mbox{for $x \in \bR^2$}
        \end{array} \right. 
        \end{equation}
has a unique solution $u^n \in \mathbb{W}$ which moreover belongs to $\mathscr{W}$, and by the Feynman-Kac formula we have for all $(t,x) \in [0,T] \times \bR^2$
\begin{equation}                  \label{eq: F-K}
u^n(t,x)= \mathbb{E} g ( X^{x;n}_{T-t}), 
\end{equation}
where $X^{x;n}$ is the unique solution of $\Pi(x; \beta^n, \sigma^n)$. 
Let  $K$ be a compact subset of $(0,T) \times D$ and notice that on $K$,  for all $n\in \bN_+$ large enough, it holds that  $L_n=L$. Moreover, $L$ is strongly elliptic on $K$ and its coefficients and all their derivatives are bounded.  By virtue of Proposition \ref{pr: C 1 2}, for  any  $Q \subset \subset \text{int}(K)$, we obtain by standard parabolic estimates
\begin{align}                                \label{eq: energy}
\nonumber
\|\nabla u^n - \nabla u \|^2_{L_2(Q)} & \leq  N \|u^n -u \|^2_{L_2(K)} +N \|(L_n-L)u \|^2_{L_2(K)} \\
& = N \|u^n -u \|^2_{L_2(K)} ,
\end{align}
for $n$ large enough, with a constant $N$ independent of $n \in \bN$.  By the properties of $\beta^n$ and $ \sigma^n$,   Lemma \ref{lem: convergence of diffusions} and \eqref{eq: F-K} we have that $u^n(t,x) \to u(t,x)$ for all  $(t,x) \in K$, and since $|u^n(t,x)| \leq \|g\|_{L_\infty}< \infty$ for all $(t,x) \in K$, $n \in \bN_+$,  we get that   $\lim_{n \to \infty} \|u^n -u \|^2_{L_2(K)} = 0$,  which due to \eqref{eq: energy} implies that 
\begin{equation}       \label{eq: gradient convergence}
\lim_{n \to \infty} \|\nabla u^n - \nabla u \|^2_{L_2(Q)} =0.
\end{equation}
On the other hand, by differentiating $u^n$ with respect to $x_1$  we easily see that 
 $v^{n}:=\D_1 u^{n}$  belongs to  $\mathcal{W}$ and satisfies
  \begin{equation*} 
  \left\{ \begin{array}{ll}
         \D_tv^n  + \hat{L}_n v^n + f^n=0  & \mbox{on $(0,T) \times \bR^2$ }\\
        v^n(T)=\D_1g  & \mbox{on $\bR^2$, }
        \end{array} \right.
        \end{equation*}
 where 
 $$
\hat{L}_n \phi:=\sum_{ij} a_{ij}^n\D_{ij} \phi+ \sum_i \hat{\beta}^n_i \D_i\phi+ c^n \phi
$$
with
$$
\hat{\beta}^n_1= \beta^n_1 +\partial_1
a_{11}^n, \  \hat{\beta}^n_2= \beta^n_2 +2\partial_1 a_{12}^n, \ c^n=\partial_1 \beta^n_1
$$
  and 
$$
f^n=(\partial_1 \beta^n_2) \D_2 u^n+(\partial_1 a_{22}^n)\D_{22}u^n.
$$
By the Feynman-Kac formula we have
\begin{align} 
\nonumber
v^n(t,x) =& \E\left[ (\D_1g)(\hat{X}^{x;n}_{T-t})e^{\int_0^{T-t} c^n(\hat{X}^{x;n}_s) ds}\right]
\\
 +& \E \left[ \int_0^{T-t} f^n(t+s, \hat{X}^{x;n}_s)e^{\int_0^s c^n(\hat{X}^{x;n}_r) dr}  \, ds\right],
 \end{align}
 where $\hat{X}^{x;n}=(\hat{Y}^{x_1;n}, \hat{Z}^{x;n})$ is the unique solution of $\Pi(x;\hat{\beta}^n, \sigma^n)$. Let us set
 $$
 \tau_n := \inf\{t \geq 0 \colon Y^{x_1}_t  \not\in  (1/n,n)\}\wedge T, 
 $$ 
 and notice that on $[\![ 0, \tau_n  ]\!]:= \{ (\omega, t) \in \Omega \times [0,T] \colon t \leq \tau_n(\omega) \}$  both $\hat{X}^x$ and $\hat{X}^{x;n}$ satisfy $\Pi(x,\hat{\beta}^n, \sigma^n)$ and since $\hat{\beta}^n, \sigma^n$ are Lipschitz continuous we have that for all $n \in \bN_+$, $\hat{X}^{x;n}= \hat{X}^x$ on $[\![ 0, \tau_n  ]\!] $. In addition, by virtue of  \eqref{as2: exit} of Assumption \ref{as: 2} we have that zero is not an exit boundary for the diffusion $\hat{Y}^{x_1}$ (see, e.g.,  \cite[pp. 14]{MR1912205}). That is, if $x_1>0$,  then  $\inf_{t \in [0,T]} \hat{Y}^{x_1}_t >0$,   which in turn implies that almost surely  $\tau_n=T$ for $n$ sufficiently large.  In particular, for almost all $\omega  \in \Omega$, 
$ \sup_{t \in [0,T]} |\hat{X}^x_t-\hat{X}^{x;n}_t|=0$ for $n$ large enough depending on $\omega \in \Omega$.  Then notice that for each $(t,x)\in [0,T] \times D$, by the properties of $\sigma^n $ and $\beta^n$ we have
$$
\lim_{n \to \infty} \E\left[ (\D_1g)(\hat{X}^{x;n}_{T-t})e^{\int_0^{T-t} c^n(\hat{X}^{x;n}_s) ds}\right]=\E\left[ (\D_1g)(\hat{X}^{x}_{T-t})e^{\int_0^{T-t} c(\hat{X}^x_s) ds}\right].
$$
Moreover, notice that similarly to Proposition \ref{pr: representation derivatives} we have 
$\D_2u^n(t,x)= \E \D_2 g(X^{x;n}_{T-t})$ and $\D_{22}u^n(t,x)= \E \D_{22} g(X^{x;n}_{T-t})$, which by  virtue of (ii) above and Lemma \ref{lem: convergence of diffusions} implies that for any sequence $(x^n)_{n=1}^\infty  \subset D$ with $\lim_{n \to \infty} x^n=x \in D$, we have
$\lim_{n \to \infty} \D_2 u^n(t,x^n)=\D_2 u(t,x)$, and $\lim_{n \to \infty} \D_{22} u^n(t,x^n)=\D_{22} u(t,x)$. This combined with the properties of $\sigma^n$, $\beta^n$ imply in turn that $\lim_{n \to \infty} f^n(t,x^n) =f(t,x)$ whenever 
$(x^n)_{n=1}^\infty  \subset D$ with $\lim_{n \to \infty} x^n=x\in D$. In addition, $f^n$ are bounded in $(t,x) \in [0,T] \times \overline{D}$, uniformly in $n \in \bN$, and then one can easily see that for each $(t,x) \in [0,T] \times D$,
\begin{align*}
\lim_{n \to \infty}& \E \left[ \int_0^{T-t} f^n(t+s, \hat{X}^{x;n}_s)e^{\int_0^s c^n(\hat{X}^{x;n}_r) dr} \,ds \right] \\
=&\E \left[ \int_0^{T-t} f(t+s, \hat{X}^x_s)e^{\int_0^s c(\hat{X}^x_r) dr} \,ds \right].
\end{align*}
Consequently, for every $(t,x) \in [0,T] \times D$, we have $\lim_{n \to \infty} v^n(t,x) =v(t,x)$, which combined with \eqref{eq: gradient convergence} gives that $\D_1 u = v$ on $[0,T] \times D$. Since $v \in C([0,T] \times \overline{D})$, it follows that $u$ is differentiable with respect to $x_1$ on 
$[0,T] \times \overline{D}$ and $\D_1u =v \in C([0,T] \times \overline{D})$. This finishes the proof.
\end{proof}

Let $D$   be an open bounded domain in $\bR^d$ where $d \in \bN_+$, and for $i,j \in \{1,\dots, d\}$,  let $a^{ij},b^i,c \colon [0,T] \times D \to \bR$ be measurable functions. Let us set $\mathcal{L} \phi:= \sum_{i,j}\D_i(a^{ij}\D_{j} \phi)+\sum_i b^i\D_i+c\phi$ and $Q:= (0,T) \times D$. 

\begin{assumption}                                \label{as: Moser}
The functions $a^{ij}, \D_la^{ij}, b^i, c$  are bounded in magnitude by a constant $N_1$. Moreover there exists a constant $\varkappa >0$ such that   
$a^{ij}\xi_i \xi_j \geq \varkappa |\xi|^2$, 
for all $\xi=(\xi_1, \dots, \xi_d) \in \bR^d$. 
\end{assumption}

The following is well-known in the theory of parabolic PDEs and for a proof we refer the reader to  \cite[pp. 211, Theorem 11.1]{MR0241821}.

\begin{lemma}                                    \label{lem: Moser}
Suppose that Assumption \ref{as: Moser} holds and let $f \in L_2((Q)$ and $ Q' \subset Q$ such that $\text{dist}(Q', \D_pQ)>0$. Then there exists a constant $N$ depending only on $N_1$, $\varkappa$, $Q$, and $\text{dist}(Q', \D_pQ)$, such that  for any $u \in C^{1,2}(Q)$ satisfying $\D_t u = \mathcal{L} u +f$ on $Q$, the estimate
$$
\|\nabla u \|_{L_\infty(Q')} \leq N( \| u \|_{L_\infty(Q)} +\| f\|_{L_\infty(Q)})
$$
holds.
\end{lemma}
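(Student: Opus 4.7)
\smallskip

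\noindent\textbf{Proof plan.} The estimate is a standard interior parabolic gradient bound for strictly elliptic operators with Lipschitz leading coefficients and $L^\infty$ lower-order coefficients, and I would obtain it by a Moser-type iteration applied to a divergence-form equation satisfied by derivatives (or difference quotients) of $u$. The plan has four steps.

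\smallskip

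\noindent\emph{Step 1: Local energy (Caccioppoli) estimate for $\nabla u$.} Fix a cylinder $Q''$ with $Q' \subset\subset Q'' \subset\subset Q$ and a smooth parabolic cutoff $\eta$ supported in $Q''$, equal to $1$ near $Q'$. Multiply the equation $\partial_t u = \mathcal{L}u + f$ by $\eta^2 u$ (if one first subtracts a constant, $u$ may be replaced by $u - k$ for levels $k$ later), integrate by parts using the divergence-form representation $\mathcal{L}u = \partial_i(a^{ij}\partial_j u) + b^i \partial_i u + cu$, and use uniform ellipticity $a^{ij}\xi_i\xi_j \geq \varkappa|\xi|^2$ together with the bounds on $a^{ij}, b^i, c$ to absorb gradient terms. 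This yields a local $L^2$ estimate
$$
\|\nabla u\|_{L_2(Q'')}^2 \leq N\bigl(\|u\|_{L_\infty(Q)}^2 + \|f\|_{L_\infty(Q)}^2\bigr),
$$
with $N = N(N_1,\varkappa,\mathrm{dist}(Q'',\partial_p Q))$.

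\smallskip

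\noindent\emph{Step 2: Equation for difference quotients.} Since $b^i$ and $c$ are only bounded (not differentiable) one cannot differentiate the equation in $x_k$ directly. Instead, introduce the discrete difference quotient $v^h_k(t,x) := h^{-1}(u(t,x+he_k) - u(t,x))$ on a slightly smaller cylinder. Because $\partial_l a^{ij}$ is bounded, $v^h_k$ satisfies (after shifting arguments) a parabolic equation in divergence form
$$
\partial_t v^h_k - \partial_i\bigl(a^{ij}(\cdot) \partial_j v^h_k\bigr) = \partial_i F^h_i + G^h,
$$
where $F^h_i, G^h$ are linear combinations of $v^h_k$, $\nabla u(\cdot + h e_k)$ and $f$ with $L^\infty$ norms controlled, uniformly in $h$, by $N_1$, $\|\nabla u\|_{L_2(Q'')}$, $\|u\|_{L_\infty(Q)}$ and $\|f\|_{L_\infty(Q)}$.

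\smallskip

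\noindent\emph{Step 3: Moser iteration for $L^\infty$ bound on $v^h_k$.} Apply Moser's parabolic $L^\infty$-from-$L^2$ iteration to $v^h_k$ on a decreasing sequence of cylinders shrinking to $Q'$. Concretely, test the equation for $v^h_k$ against $\eta^2 (v^h_k)^{p-1}$ for increasing exponents $p = 2\cdot(1+\kappa)^m$ (with $\kappa$ coming from parabolic Sobolev embedding), use Young's inequality to absorb the forcing into the elliptic term on the left, and iterate as in \cite[Ch.~III]{MR0241821}. The result is a uniform-in-$h$ bound
$$
\|v^h_k\|_{L_\infty(Q')} \leq N\bigl(\|u\|_{L_\infty(Q)} + \|f\|_{L_\infty(Q)}\bigr).
$$
Sending $h \to 0$ gives $\|\partial_k u\|_{L_\infty(Q')} \leq N(\|u\|_{L_\infty(Q)} + \|f\|_{L_\infty(Q)})$ for each $k$.

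\smallskip

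\noindent\emph{Main obstacle.} The delicate point is executing the Moser iteration cleanly when the forcing terms $F^h_i, G^h$ involve $\nabla u$ itself: one must either insert the Caccioppoli estimate of Step~1 (to bootstrap the $L^2$ norm of $\nabla u$ on successively shrinking cylinders into the iteration constants) or run a simultaneous iteration for $u$ and $\nabla u$. All constants remain controlled by $N_1, \varkappa, Q$ and $\mathrm{dist}(Q',\partial_p Q)$, matching the claimed dependence. This is precisely the argument carried out in \cite[Ch.~III, Thm.~11.1]{MR0241821}, which the authors invoke.
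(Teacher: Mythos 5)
Your plan coincides with the paper's own proof: both derive a divergence-form equation for $\nabla u$ (you via difference quotients, the paper by differentiating the equation and keeping the non-differentiable terms $b^i\partial_i u$, $cu$, $f$ inside an outer derivative in the weak formulation, so that they hit $\partial_l\zeta$ rather than being differentiated) and then run a Moser iteration with exponents $p_n=2\gamma^n$ on shrinking cylinders in which all components of $\nabla u$ are iterated simultaneously --- exactly the resolution you name for the self-referential forcing. The one slip is the claim that the $L^\infty$ norms of $F^h_i,G^h$ are controlled by $\|\nabla u\|_{L_2}$: these terms are pointwise bounded multiples of $\nabla u$ itself, so only their $L^{p_n}$ norms enter each step, which is precisely why the iteration must be run jointly over the components of $\nabla u$ (as in the paper's recursion $\mathfrak{P}_{n+1}\leq C_n\mathfrak{P}_n+\hat C_n\|\hat f\|_{L_\infty}$ with $\prod_n C_n<\infty$) and why it nevertheless closes.
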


\begin{proposition}                \label{pr: blow up rate second derivative}
Under Assumption \ref{as: 2},  for each $i \in \{1,2\}$ we have 
$$
\lim_{n \to \infty}  a_{i1} (x^n)\D_{i1}u(t^n,x^n) =0,
$$
for any sequence  $(t^n,x^n)_{n=1}^\infty \subset (0,T) \times D$ with $\lim_{n \to \infty}(t^n,x^n)=(t^0,x^0)$, where   $x^0 \in \D D$, $t^0 \in (0,T)$.  
\end{proposition}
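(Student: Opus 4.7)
The plan is a localized rescaling (blow-up) combined with the interior parabolic gradient estimate of Lemma~\ref{lem: Moser}, applied twice.

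\emph{Reduction.} When $i=2$ and $\lambda=0$, the coefficient $a_{21}=\lambda\sigma_1\sigma_2/2$ vanishes identically and the claim is immediate. Henceforth I would assume either $i=1$, or $i=2$ with $\lambda\neq 0$. In both remaining cases, \eqref{as2: diffusion behaviour} of Assumption~\ref{as: 2} gives $a_{i1}(x_1)=O(a_{11}(x_1))=O(\sigma_1^2(x_1))$ as $x_1\to 0^+$.

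\emph{The blow-up.} For positive scales $\tau_n,h_n^{(1)},h_n^{(2)}\to 0$ to be chosen, set
\[
v_n(s,z_1,z_2):=u\bigl(t^n+\tau_n s,\ x_1^n+h_n^{(1)} z_1,\ x_2^n+h_n^{(2)} z_2\bigr).
\]
I would match the scales to the natural scaling of \eqref{eq: main SDE} near $\{x_1=0\}$: take $h_n^{(1)}$ and $h_n^{(2)}$ both comparable to $x_1^n$ when $\lambda\neq 0$, or $h_n^{(1)}\sim x_1^n$ and $h_n^{(2)}\sim\sqrt{\tau_n}$ when $\lambda=0$, with $\tau_n$ chosen so that the leading coefficient of $\D_{11}v_n$ is normalized to a nonzero constant at $z=0$. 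A direct computation then shows that for $n$ large $v_n$ is defined and smooth on a fixed parabolic cylinder $Q=(-\delta,\delta)\times(-1,1)^2$ and satisfies
\[
\D_s v_n+\sum_{i,j}\tilde{a}^n_{ij}(z)\,\D_{ij}v_n+\sum_i\tilde{b}^n_i(z)\,\D_i v_n=0\quad\text{on }Q.
\]
By Assumption~\ref{as: 2}, in particular the smoothness on $(0,\infty)$ and the relation $\sigma_1\asymp\sigma_2$ from \eqref{as2: diffusion behaviour}, the rescaled coefficients $\tilde{a}^n_{ij},\tilde{b}^n_i$ together with their first $z$-derivatives are bounded, and the matrix $(\tilde{a}^n_{ij})$ is uniformly elliptic, all with constants independent of $n$; the off-diagonal $\tilde{a}^n_{12}$ vanishes identically when $\lambda=0$ and is controlled by \eqref{as2: diffusion behaviour} when $\lambda\neq 0$. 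Thus Assumption~\ref{as: Moser} holds uniformly in $n$.

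\emph{Iterated interior regularity.} The function $v_n-u(t^0,x^0)$ solves the same equation as $v_n$ (no zero-order term), and by Proposition~\ref{prop: continuity u},
\[
\|v_n-u(t^0,x^0)\|_{L_\infty(Q)}\longrightarrow 0.
\]
Lemma~\ref{lem: Moser} therefore yields $\|\nabla v_n\|_{L_\infty(Q_1)}\to 0$ on any $Q_1\subset\subset Q$. Differentiating the rescaled equation once in $z$ produces a parabolic PDE of the same form satisfied by each $\D_k v_n$, with source bounded by $C\|\nabla v_n\|_{L_\infty(Q_1)}$. A second application of Lemma~\ref{lem: Moser} gives $\|D^2 v_n\|_{L_\infty(Q_2)}\to 0$ for $Q_2\subset\subset Q_1$.

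\emph{Translation back.} From $\D_{i1}v_n(0,0,0)=h_n^{(1)} h_n^{(i)}\,\D_{i1}u(t^n,x^n)$ I would conclude
\[
a_{i1}(x^n)\,\D_{i1}u(t^n,x^n)\;=\;\frac{a_{i1}(x_1^n)}{h_n^{(1)} h_n^{(i)}}\,\D_{i1}v_n(0,0,0).
\]
The scales are chosen so that, by $a_{i1}(x_1^n)=O(a_{11}(x_1^n))$, the prefactor remains bounded, and the second factor tends to zero by the previous step. This is the desired limit.

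\emph{Main obstacle.} The delicate point is selecting the scales so that the rescaled PDE is uniformly parabolic on a fixed cylinder $Q$ \emph{and} the prefactor $a_{i1}(x_1^n)/(h_n^{(1)} h_n^{(i)})$ stays bounded. When $a_{11}$ vanishes at the slowest rate consistent with Assumption~\ref{as: 2}---for example linearly at $0$, as in the Heston and CIR examples---these two requirements compete, since $\sqrt{a_{11}(x_1^n)}\gg x_1^n$ in that regime. The resolution I would use is to exploit the Lipschitz continuity of $u$ in $x_1$ (coming from Proposition~\ref{pr: continuous derivative x1}) as an improved baseline $\|v_n-u(t^0,x^0)\|_{L_\infty(Q)}=O(h_n^{(1)})$; feeding this sharper smallness back through the iterated Moser estimate produces a rate of vanishing for $D^2 v_n$ at the origin that outruns the possibly divergent prefactor, thereby closing the argument.
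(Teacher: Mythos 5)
Your overall strategy --- rescale to a unit cylinder at a scale dictated by the degeneracy of $a_{11}$ and invoke the uniform interior estimate of Lemma~\ref{lem: Moser} --- is the same as the paper's, and you correctly identify the crux: the prefactor $a_{i1}(x_1^n)/(h_n^{(1)}h_n^{(i)})$ can diverge, so a qualitative statement $D^2v_n\to 0$ is not enough. But your proposed resolution does not close the gap. Feeding the baseline $\|v_n-u(t^0,x^0)\|_{L_\infty(Q)}=O(h_n^{(1)})$ through two applications of Lemma~\ref{lem: Moser} yields at best $\|D^2 v_n\|_{L_\infty(Q_2)}=O(h_n^{(1)})$, since the second application controls $D^2v_n$ by the oscillation of $\nabla v_n$, which the first application only bounds by $O(h_n^{(1)})$. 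Undoing the scaling then gives $|a_{11}(x_1^n)\D_{11}u(t^n,x^n)|=O\bigl(a_{11}(x_1^n)/h_n^{(1)}\bigr)=O(1)$: boundedness, not convergence to zero. In addition, the Lipschitz-in-$x_1$ bound says nothing about the oscillation of $u$ in $t$ over a window of length $\tau_n$ (Lipschitz continuity of $u$ in time up to $\D D$ is essentially Proposition~\ref{prop: continuity time derivative}, which is proved \emph{after}, and using, the present proposition), and the source obtained by differentiating the rescaled equation in $z_1$ is not bounded by $C\|\nabla v_n\|$: it contains $(\D_{z_1}\tilde a^n_{22})\D_{z_2z_2}v_n$ and $(\D_{z_1}\tilde a^n_{12})\D_{z_1z_2}v_n$, i.e.\ precisely the second derivatives you are trying to estimate.

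The missing ingredient is the full strength of Proposition~\ref{pr: continuous derivative x1}: $\D_1 u$ is \emph{continuous up to $\D D$}. The paper differentiates the original equation in $x_1$ \emph{before} rescaling, so that $\hat v:=\D_1 v$ solves $\D_t\hat v=\hat L\hat v+f$ with $f=(\D_1\beta_2)\D_2u+(\D_1a_{22})\D_{22}u$ bounded by Proposition~\ref{pr: representation derivatives}; it then rescales isotropically in $(t,x_1,x_2)$ by $\theta^n:=a_{11}(x_1^n)/K$, with $K$ the Lipschitz constant of $2a_{11}$ (note the correct scale is $a_{11}(x_1^n)$, not $x_1^n$ --- this is what makes $a_{11}\circ p_n^{-1}/\theta^n$ uniformly elliptic via the Lipschitz bound and what gives $a_{i1}(x_1^n)\leq N\theta^n$ via \eqref{as2: diffusion behaviour}), and applies Lemma~\ref{lem: Moser} \emph{once}, to $w^n-\hat v(T-t^0,x^0)$. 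Continuity of $\D_1u$ at the boundary point gives $\|w^n-\hat v(T-t^0,x^0)\|_{L_\infty(Q)}\to 0$ with no rate needed, the rescaled source carries a factor $\theta^n$ and vanishes, and the conclusion $\|\nabla w^n\|_{L_\infty(Q')}\to 0$ reads $\theta^n|\D_{i1}u(t^n,x^n)|\to 0$, which combined with $a_{i1}(x_1^n)\leq N\theta^n$ finishes the proof. Equivalently in your notation: you must subtract from $\D_{z_1}v_n$ the constant $h_n^{(1)}\D_1u(t^0,x^0)$ and show the difference is $o(h_n^{(1)})$ uniformly; that little-$o$, which the Lipschitz bound alone cannot supply, is exactly what turns your $O(1)$ into the required limit $0$.
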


\begin{proof}
Let $\{(t^n,x^n)\}_{n=1}^\infty \subset (0,T) \times D$ be a sequence converging to $(t^0, x^0) \in (0,T) \times \D D$  and set $s^n=T-t^n$. Recall that $2a_{11}$ is Lipschitz near zero, with a Lipschitz constant $K>0$. Around the point $(t^n,x^n)$ consider  the rectangle
$$
R^n:= \left(s^n-\theta^n,s^n+\theta^n\right)\times (x_1^n-\theta^n, x_1^n+\theta^n) \times (x_2^n-\theta^n, x_2^n+\theta^n),
$$
where $\theta^n= a_{11}(x_1^n)/K$, 
and let $p_n \colon \bR^3 \to \bR^3$ be given by 
$$
p_n(t,x_1,x_2)= \left(\frac{Tt}{4\theta^n}+T\left(\frac{3}{4}-\frac{s^n}{4\theta^n}\right), \frac{x_1+2\theta^n-x_1^n}{2\theta^n},   \frac{x_2-x_2^n}{2\theta ^n}\right).
$$
It follows that $p_n(R^n)=(T/2,T) \times (1/2,3/2) \times (-1/2,1/2)=:Q$, and $p_n(s^n, x_1^n,x_2^n)= (3T/4,1,0)=:q$,   for all $n \in \bN$. 
Since $u \in C^{1,2}((0,T)\times D)$ and satisfies $\D_t u+Lu=0$, we have for 
$v(t,x):=u(T-t,x)$ that $v \in C^{1,2}((0,T)\times D)$ and satisfies $\D_t v=Lv$ on $(0,T) \times D$. Moreover, since the coefficients are smooth in $D$ it follows that $v \in C^{1,k} (Q)$ for any $k \in \bN$ (see, e.g. Theorem 10, page 72 in \cite{MR0181836}); in particular,  $\hat{v}:=\D_1 v \in C^{1,2}(R^n)$.  It is easy to see that  $\D_t\hat{v} = \hat{L} \hat{v}+f$ on $R^n$, where $\hat{L}$ and $f$ are given  in \eqref{eq: operator hat} and \eqref{eq: free term} respectively.  It follows then that for all $n \in \bN$, $w^n:= \hat{v}\circ p_n^{-1} \in C^{1,2}(Q)$ and it satisfies on $Q$
$$
\D_tw^n= \hat{L}_nw^n+f^n,
$$
where 
\begin{equation}                  \label{eq: operator hat n}
\begin{aligned}
\hat{L}_n \phi := 
\sum_{ij} \frac{a_{ij}\circ p_n^{-1}}{T\theta^n} \D_{ij}\phi+ \sum_i \frac{2\hat{\beta}_i\circ p_n^{-1}}{T} \D_i \phi +\frac{4\theta^n}{T} c\circ p_n^{-1}\phi
\end{aligned}
\end{equation}
and $f^n:=(4\theta^n/T) f \circ p^{-1}_n$. Recall that $a_{11}$ is Lipschitz continuous near zero with Lipschitz constant $K/2$. Consequently, for any $x_1 \in (1/2, 3/2)$ we have 
$$
a_{11}(2\theta^n(x_1-1)+x_1^n) \geq - K\theta^n |1-x_1| + a_{11}(x_1^n),
$$
which implies that on $Q$ we have 
$$
\frac{a_{11}\circ p_n^{-1}}{\theta^n }\geq - K |1-x_1| +K \geq \frac{K}{2}.
$$ 
By \eqref{as2: diffusion behaviour} of Assumption \ref{as: 2} we have on $Q$ (for all  $n$ sufficiently large)
$$
\frac{a_{22}\circ p_n^{-1}}{\theta^n }\geq \frac{N_0 K}{2}.
$$
Moreover, one can easily check that the coefficients of $\hat{L}_n$ are bounded on $Q$ uniformly in $n \in \bN$. Consequently the operators  $\hat{L}_n$ satisfy the assumption of Lemma~\ref{lem: Moser} with constants $N_1$ and $\varkappa$ independent of $n \in \bN$. 
 Let $Q' \subset Q$ be a cylinder with $\text{dist}(Q',\D_pQ)>0$ and such that $q \in Q'$, and set $\varrho:= \hat{v}(s^0,x^0)$. Notice that on $Q$ we have 
 $$
 \D_t(w^n-\varrho) =\hat{L}_n(w^n-\varrho)+ \frac{4\theta^n}{T} \varrho c \circ p_n^{-1}+f^n, 
 $$
and by virtue of  Lemma \ref{lem: Moser}  there exists a constant $N$ such that for all $n \in \bN$ large enough
$$
\|\nabla w^n\|_{L_\infty(Q')} \leq N (\|w^n -\varrho\|_{L_\infty(Q)}+\theta^n  \| c \circ p_n^{-1}\|_{L_\infty(Q)}+\|f^n\|_{L_\infty(Q)}).
$$
Since $f$ and $c$ are  bounded we have 
$$
\lim_{n \to \infty}\|f^n\|_{L_\infty(Q)}=\lim_{n \to \infty}\theta^n\|c \circ p^{-1}_n\|_{L_\infty(Q)}=0.
$$
 Also, since $\hat{v} \in C([0,T] \times \overline{D})$ we have
$$
\lim_{n \to \infty}\|w^n -\varrho\|_{L_\infty(Q)}= \lim_{n \to \infty} \| \hat{v} -\varrho\|_{L_\infty(R^n)}=0.
$$
Consequently, for each $i \in \{1,2\}$ we have
\begin{align*}
|a_{i1}(x^n_1)\D_{i1}u(t^n,x^n)|&=|a_{i1}(x^n_1)\D_i \hat{v}(s^n,x^n)|\\
& \leq N \theta^n|\D_i \hat{v}(s^n,x^n)|  \\
& \leq N |\D_iw^n (q)| \leq N \|\nabla w^n\|_{L_\infty(Q')} \to 0,
\end{align*}
where the first inequality above follows from (\ref{as2: diffusion behaviour}) of Assumption \ref{as: 2}. This brings the proof to an end.
\end{proof}

\begin{proposition}                 \label{prop: continuity time derivative}
Under Assumption \ref{as: 2} we have that $\D_t u \in C( (0,T) \times \overline{D})$.
\end{proposition}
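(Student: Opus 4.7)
The plan is to exploit the interior PDE $\D_t u + Lu = 0$ to represent $\D_t u$ as a linear combination of spatial derivatives of $u$, and then to check that each term admits a continuous extension to the boundary $\D D$ by combining Propositions \ref{pr: representation derivatives}, \ref{pr: continuous derivative x1}, and \ref{pr: blow up rate second derivative}.

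Concretely, on $(0,T) \times D$ one has
\[
\D_t u = -a_{11}\D_{11} u - 2 a_{12} \D_{12} u - a_{22}\D_{22} u -\beta_1 \D_1 u - \beta_2 \D_2 u.
\]
I would fix a sequence $(t^n, x^n) \in (0,T) \times \overline{D}$ with $(t^n, x^n) \to (t^0, x^0) \in (0,T) \times \overline{D}$. If $x^0 \in D$, the sequence is eventually contained in a compact subset of $(0,T) \times D$ and the conclusion follows immediately from $u \in C^{1,2}((0,T) \times D)$. In the boundary case $x^0_1 = 0$, I would first restrict to the subsequence with $x^n \in D$ and apply Proposition \ref{pr: blow up rate second derivative}, which yields $a_{11}(x^n_1)\D_{11}u(t^n,x^n) \to 0$ and, using $a_{12} = a_{21}$ together with $\D_{12}u = \D_{21}u$, also $a_{12}(x^n_1)\D_{12}u(t^n,x^n) \to 0$. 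The remaining terms converge by the continuity of $\D_1 u$, $\D_2 u$, and $\D_{22}u$ on $[0,T] \times \overline{D}$ provided by Propositions \ref{pr: continuous derivative x1} and \ref{pr: representation derivatives}, together with the continuity of $a_{22}$, $\beta_1$, $\beta_2$ on $[0,\infty)$. The common limit is
\[
-a_{22}(0)\D_{22}u(t^0,x^0) - \beta_1(0)\D_1 u(t^0,x^0) - \beta_2(0)\D_2 u(t^0,x^0),
\]
which coincides with the value prescribed for $\D_t u$ on $\D D$ by the third equation in \eqref{eq: parabolic pde}.

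Defining $\D_t u$ on $(0,T) \times \D D$ by the above right-hand side, the extension is continuous along $\D D$ by the same pointwise continuity results. A general approximating sequence to a boundary point can be split into its intersections with the interior and the boundary, each yielding the same limit, so $\D_t u \in C((0,T) \times \overline{D})$ as claimed, and the boundary equation of \eqref{eq: parabolic pde} is simultaneously verified. The only genuinely delicate step is the vanishing of the singular terms $a_{i1}\D_{i1}u$ as $x \to \D D$; since this is precisely the content of Proposition \ref{pr: blow up rate second derivative}, no further obstacle should arise, and the remainder is a direct application of earlier regularity results combined with the continuity of the coefficients.
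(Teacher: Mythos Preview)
Your approach is essentially the paper's: both write $\D_t u=-Lu$ in the interior and then invoke Propositions \ref{pr: representation derivatives}, \ref{pr: continuous derivative x1}, and \ref{pr: blow up rate second derivative} to see that each term extends continuously to $\D D$, with the singular contributions $a_{i1}\D_{i1}u$ vanishing in the limit.

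There is one point you gloss over. You \emph{define} $\D_t u$ on $(0,T)\times\D D$ to be the limiting value of $-Lu$ from the interior, but you never verify that this is actually the time derivative of $u$ at such points, i.e.\ that $\lim_{h\to 0}h^{-1}\bigl(u(t+h,x^0)-u(t,x^0)\bigr)$ exists and equals your formula when $x^0\in\D D$. The paper does this explicitly: for $x^0\in\D D$ it perturbs to the interior point $x_{(h)}=(h^2,x^0_2)$, uses the continuity of $\D_1 u$ up to the boundary to write
\[
\frac{u(t+h,x^0)-u(t,x^0)}{h}=\frac{u(t+h,x_{(h)})-u(t,x_{(h)})}{h}+O(h),
\]
applies the mean value theorem in $t$ to rewrite the right-hand side as $-(Lu)(t+\xi(h),x_{(h)})+O(h)$, and only then lets $h\to 0$ via the same three propositions. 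Your argument can be completed just as easily---for instance by integrating the continuous extension of $\D_t u$ along $t$-lines at interior points $x^{(n)}\to x^0$ and passing to the limit using the continuity of $u$ on $[0,T]\times\overline{D}$---but as written this step is missing.
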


\begin{proof}
We show first that $u(t,x)$ is differentiable with respect to $t \in (0,T)$ for any $x \in \D D$. For $t \in (0,T)$ and $x \in \D D$ we have 
$$
\frac{u(t+h,x)-u(t,x)}{h} = \frac{u(t+h,x_{(h)})-u(t,x_{(h)})}{h}+ O(h)
$$
where $x_{(h)}=(h^2, x_2)$. By the mean value theorem we have that the right hand side of the above inequality is equal to 
$$
(\D_tu)(t+\xi(h), x_{(h)})+O(h)= -(Lu)(t+\xi(h), x_{(h)})+O(h)
$$
for some $\xi(h) \in [0,h]$. Consequently, by virtue of Propositions \ref{pr: representation derivatives}, \ref{pr: continuous derivative x1}, and \ref{pr: blow up rate second derivative} we obtain
$$
\lim_{h \to 0}\frac{u(t+h,x)-u(t,x)}{h} = -a_{22}(0)\D_{22}u(t,x)-\sum_i \beta_i(0)\D_iu(t,x).
$$
Hence, the time derivative exists. Moreover, by the above equality combined again with Propositions \ref{pr: representation derivatives}, \ref{pr: continuous derivative x1}, and \ref{pr: blow up rate second derivative} and the fact that $\D_tu=-Lu$ on $(0,T) \times D$, it follows that $\D_tu$ is continuous on $(0,T) \times \overline{D}$.

\end{proof}

\begin{proof}[Proof of Theorem \ref{thm: solution of PDE}]
The fact that $u$ is indeed a solution follows from Propositions \ref{prop: continuity u} to \ref{prop: continuity time derivative}. Hence we proceed with the uniqueness part.
It suffices to show  that if $g=0$ and $u$ is a solution of \eqref{eq: parabolic pde} having polynomial growth, then $u\geq 0$. To this end, let $v$ be a solution of \eqref{eq:parabolic pde} such that for some constant $N$ we have for all $(t,x) \in (0,T) \times D$ that $|v(t,x)| \leq N(1+|x|^{m-1})$ with an integer $m \geq 2$. Let us also set $\tilde{v}(t,x)= v(T-t,x)$.
 Let $w(t,x)=1+|x_1|^m+|x_2|^m$ and notice that due to \eqref{as2: regularity} of Assumption \ref{as: 2}  we obtain that 
$ Lw < cw-1$ for a sufficiently large constant $c$. Let us set $\tilde{v}^\varepsilon= u+\varepsilon e^{ct}w$ and notice that on $(0,T) \times D$ we have 
\begin{equation}                           \label{eq: strictly superparabolic}
\D_t \tilde{v}^\varepsilon-L \tilde{v}^\varepsilon= \varepsilon e^{ct}(cw-Lw)>\varepsilon e^{ct}.
\end{equation}
Assume that $\{\tilde{v}^\varepsilon <0 \}=:\Gamma$ is non-empty for some $\varepsilon>0$ (otherwise there is nothing to prove)  and notice that by the growth condition on $v$ and the definition of $w$ we have that $\Gamma$ is bounded. Let $s= \inf \Gamma_{[0,T]}$, where $\Gamma_{[0,T]}$ is the projection of $\Gamma$ on $[0,T]$. Since $\overline{\Gamma}$ is compact, there exists $ z=(z_1,z_2) \in \overline{D}$ such that $(s,z) \in \overline{\Gamma}$ and by the continuity of $\tilde{v}^\varepsilon$ we get $\tilde{v}^\varepsilon(s,z)=0$ which in particular implies that $s> 0$. First assume that $ z \in \D D$. By definition of $s$ we have that 
$\tilde{v}^\varepsilon (t,z) \geq 0$ for $t <s$, $\tilde{v}^\varepsilon (s,0,x_2) \geq 0$ for all $x_2 \in \bR$, and $\tilde{v}^\varepsilon (s,x_1,z_2) \geq 0$ for all $x_1 \geq 0$. Consequently, 
$\D_t\tilde{v}^\varepsilon(s,z) \leq 0$, $\D_2\tilde{v}^\varepsilon(s,z)=0$, $\D_{22}\tilde{v}^\varepsilon(s,z) \geq 0$, and $\D_1\tilde{v}^\varepsilon(s,z)\geq 0$. Since $\beta_1(0), a_{11}(0)\geq 0$ we obtain
\begin{equation}      \label{eq: boundary epsilon}
\delta:=\D_t \tilde{v}^\varepsilon(s,z)- \beta_1(0)\D_1\tilde{v}^\varepsilon(s,z)-\beta_2(0)\D_2\tilde{v}^\varepsilon(s,z)-a_{22}(0)\D_{22}\tilde{v}^\varepsilon(s,z)  \leq 0 
\end{equation}
It follows from Definition \ref{def: solution PDE}  that 
$$
\lim_{(0,T) \times D \ni (t,x) \to (s,z)} a_{11}\D_{11}\tilde{v}+a_{12}\D_{12}\tilde{v}=0
$$
which in turn implies that
$$
\lim_{(0,T) \times D \ni (t,x) \to (s,z)} a_{11}\D_{11}\tilde{v}^\varepsilon+a_{12}\D_{12}\tilde{v}^\varepsilon=0,
$$
where we have used that $a_{12}(0)=a_{11}(0)=0$.
Combining this with \eqref{eq: strictly superparabolic} and \eqref{eq: boundary epsilon} gives 
$$
\varepsilon e^{cs} \leq \lim_{(0,T) \times D \ni (t,x) \to (s,z)}  \D_t \tilde{v}^\varepsilon-L \tilde{v}^\varepsilon = \delta \leq 0,
$$
which is a contradiction. Hence, $z_1>0$ and $z\in D$ is a local minimum of the function $\tilde{v}^\varepsilon(s,\cdot)$. It follows then that $L \tilde{v}^\varepsilon(s,z) \geq 0$ or else, if $L \tilde{v}^\varepsilon(s,z)<0$, then on a ball around $z$ we have $L \tilde{v}^\varepsilon(s,z)<0$ and since   $\tilde{v}^\varepsilon(s, \cdot)$ has minimum on $z$ we have by the Hopf maximum principle (see, e.g.,  \cite[pp. 349, Theorem 3]{MR2597943}) that $\tilde{v}^\varepsilon(s,\cdot)=0$ near $z$ and in particular $L\tilde{v}^\varepsilon(s,z)=0$. Hence 
$$
\D_t \tilde{v}^\varepsilon(s,z)-L \tilde{v}^\varepsilon(s,z) \leq 0,
$$
which again contradicts \eqref{eq: strictly superparabolic}. This shows that $\tilde{v}^\varepsilon \geq 0$, and since this is true for all $\varepsilon>0$ we have $v \geq 0$. This brings the proof to an end. 
\end{proof}

\bibliographystyle{plain}
\bibliography{BibKE}

\begin{thebibliography}{10}

\bibitem{BAH}
Khaled Bahlali, Brahim Mezerdi, and Youssef Ouknine.
\newblock Pathwise uniqueness and approximation of solutions of stochastic
  differential equations.
\newblock In {\em S\'eminaire de {P}robabilit\'es, {XXXII}}, volume 1686 of
  {\em Lecture Notes in Math.}, pages 166--187. Springer, Berlin, 1998.

\bibitem{BKX}
Erhan Bayraktar, Constantinos Kardaras, and Hao Xing.
\newblock Valuation equations for stochastic volatility models.
\newblock {\em SIAM J. Financial Math.}, 3(1):351--373, 2012.

\bibitem{MR1912205}
Andrei~N. Borodin and Paavo Salminen.
\newblock {\em Handbook of {B}rownian motion---facts and formulae}.
\newblock Probability and its Applications. Birkh\"auser Verlag, Basel, second
  edition, 2002.

\bibitem{CIR}
John~C. Cox, Jonathan~E. Ingersoll, Jr., and Stephen~A. Ross.
\newblock A theory of the term structure of interest rates.
\newblock {\em Econometrica}, 53(2):385--407, 1985.

\bibitem{ELT}
Erik Ekstr\"om, Per L\"otstedt, and Johan Tysk.
\newblock Boundary values and finite difference methods for the single factor
  term structure equation.
\newblock {\em Appl. Math. Finance}, 16(3-4):253--259, 2009.

\bibitem{ET2}
Erik Ekstr\"om and Johan Tysk.
\newblock The {B}lack-{S}choles equation in stochastic volatility models.
\newblock {\em J. Math. Anal. Appl.}, 368(2):498--507, 2010.

\bibitem{ET}
Erik Ekstr\"om and Johan Tysk.
\newblock Boundary conditions for the single-factor term structure equation.
\newblock {\em Ann. Appl. Probab.}, 21(1):332--350, 2011.

\bibitem{MR2597943}
Lawrence~C. Evans.
\newblock {\em Partial differential equations}, volume~19 of {\em Graduate
  Studies in Mathematics}.
\newblock American Mathematical Society, Providence, RI, second edition, 2010.

\bibitem{F}
William Feller.
\newblock Two singular diffusion problems.
\newblock {\em Ann. of Math. (2)}, 54:173--182, 1951.

\bibitem{MR0181836}
Avner Friedman.
\newblock {\em Partial differential equations of parabolic type}.
\newblock Prentice-Hall, Inc., Englewood Cliffs, N.J., 1964.

\bibitem{GK1}
Istv{\'a}n Gy{\"o}ngy and Nicolai Krylov.
\newblock Existence of strong solutions for {I}t\^o's stochastic equations via
  approximations.
\newblock {\em Probab. Theory Related Fields}, 105(2):143--158, 1996.

\bibitem{ItoMcKean}
Kiyosi It\^o and Henry~P. McKean, Jr.
\newblock {\em Diffusion processes and their sample paths}.
\newblock Springer-Verlag, Berlin-New York, 1974.
\newblock Second printing, corrected, Die Grundlehren der mathematischen
  Wissenschaften, Band 125.

\bibitem{KR1}
N.~V. Krylov.
\newblock {\em Introduction to the theory of random processes}, volume~43 of
  {\em Graduate Studies in Mathematics}.
\newblock American Mathematical Society, Providence, RI, 2002.

\bibitem{KR}
N.~V. Krylov and B.~L. Rozovski{\u\i}.
\newblock Stochastic evolution equations.
\newblock In {\em Current problems in mathematics, {V}ol. 14 ({R}ussian)},
  pages 71--147, 256. Akad. Nauk SSSR, Vsesoyuz. Inst. Nauchn. i Tekhn.
  Informatsii, Moscow, 1979.

\bibitem{MR0241821}
O.~A. Lady{\v z}enskaja, V.~A. Solonnikov, and N.~N. Ural'ceva.
\newblock {\em {L}inear and quasilinear equations of parabolic type}, volume
  Vol. 23 of {\em Translations of Mathematical Monographs}.
\newblock American Mathematical Society, Providence, R.I., 1968.

\bibitem{MR2160585}
Rong Situ.
\newblock {\em Theory of stochastic differential equations with jumps and
  applications}.
\newblock Mathematical and Analytical Techniques with Applications to
  Engineering. Springer, New York, 2005.
\newblock Mathematical and analytical techniques with applications to
  engineering.

\end{thebibliography}

\end{document}